 \newcommand{\be}{\mathbf{E}}
 \newcommand{\bp}{\mathbf{P}}
\newcommand{\ott}{[0,T]}
\newcommand{\ca}{{\mathcal A}}
\newcommand{\cac}{{\mathcal C}}
\newcommand{\cd}{{\mathcal D}}
\newcommand{\cf}{{\mathcal F}}
\newcommand{\ct}{{\mathcal T}}
\newcommand{\cu}{{\mathcal U}}
\newcommand{\al}{\alpha}
\newcommand{\ga}{\gamma}
\newcommand{\gga}{\Gamma}
\newcommand{\ep}{\varepsilon}
\newcommand{\ka}{\kappa}
\newcommand{\si}{\sigma}
\newcommand{\lcl}{\left\{}
\newcommand{\rcl}{\right\}}
\newcommand{\lp}{\left(}
\newcommand{\rp}{\right)}
\newcommand{\lc}{\left[}
\newcommand{\rc}{\right]}
\newcommand{\bean}{\begin{eqnarray*}}
\newcommand{\eean}{\end{eqnarray*}}
\newcommand{\ben}{\begin{enumerate}}
\newcommand{\een}{\end{enumerate}}
\newcommand{\beq}{\begin{equation}}
\newcommand{\eeq}{\end{equation}}
\newtheorem{thm}{Theorem}[section]
\newtheorem{lem}[thm]{Lemma}
\newtheorem{prop}[thm]{Proposition}
\newtheorem{defn}[thm]{Definition}
\theoremstyle{remark}
\def\half{{\frac{1}{2}}}
\newcommand{\R}{\mathbb{R}}
\newcommand{\C}{\mathbb{C}}
\newcommand{\Z}{\mathbb{Z}}
\newcommand{\RR}{\mathbb{R}}
\newcommand{\EX}{\mathbf{E}}
\newcommand{\II}{{\rm i}}
\begin{document} 

\title[Discretizing the fractional L\'evy Area]{Discretizing the fractional L\'evy Area}

\author{A. Neuenkirch, S. Tindel and J. Unterberger}
\address{A. Neuenkirch, Johann Wolfgang Goethe Universit\"at,
Institut f\"ur Mathematik,
Robert Mayer Str. 10,
D-60325 Frankfurt am Main,
Germany} \email{neuenkirch@math.uni-frankfurt.de}

\address{Samy Tindel, Institut {\'E}lie Cartan Nancy, Universit\'e de Nancy 1, B.P. 239,
54506 Vand{\oe}uvre-l{\`e}s-Nancy Cedex, France}
\email{tindel@iecn.u-nancy.fr}

\address{J\'er\'emie Unterberger, Institut {\'E}lie Cartan Nancy, Universit\'e de Nancy 1, B.P. 239,
54506 Vand{\oe}uvre-l{\`e}s-Nancy Cedex, France}
\email{jeremie.unterberger@iecn.u-nancy.fr}

\thanks{Supported by the DAAD (PPP-Procope D/0707564) and Egide (PHC-Procope 17879TH)}

\subjclass{Primary 60H35; Secondary 60H07, 60H10, 65C30}
\date{\today}
\keywords{fractional Brownian motion, L\'evy area, approximation schemes}

\begin{abstract}
In this article, we give sharp bounds  for the Euler- and  trapezoidal discretization of the L\'evy area associated to a $d$-dimensional fractional Brownian motion.
We show that there are three different regimes for the exact root mean-square convergence rate of the Euler scheme. For $H<3/4$ the exact convergence rate is $n^{-2H+1/2}$, where $n$ denotes the number of the discretization subintervals, while for $H=3/4$ it is $n^{-1} ( \log(n))^{1/2}$ and for $H>3/4$ the exact rate is $n^{-1}$. Moreover, the trapezoidal scheme has exact convergence rate $n^{-2H+1/2}$ for $H>1/2$.
Finally, we also derive the asymptotic error distribution of the Euler scheme. For $H \leq 3/4$ one obtains a Gaussian limit, while for $H>3/4$ the limit distribution is of Rosenblatt type.
\end{abstract}

\maketitle

\section{Introduction and Main Results}

Let $B=(B^{(1)},\ldots,B^{(d)})$ be a $d$-dimensional fractional Brownian motion (fBm) with Hurst parameter $H \in  (1/4,1)$ indexed by $\RR$,  i.e.~$B$ is  composed of $d$ independent centered Gaussian processes whose covariance function is given by
$$
R_H(s,t) = \frac{1}{2} \lp |s|^{2H} + |t|^{2H} - |t-s|^{2H} \rp, \qquad s,t \in\RR.
$$
For an arbitrary $T>0$, a typical differential equation on $\ott$ driven by $B$ can be written as
\beq\label{eq:1}
Y_t= a + \int_0^t \si(Y_s) \, dB_s, \quad t\in \ott,
\eeq
 where  $a \in \R^n$ is a given initial condition and $\si: \R^n\to\R^{n,d}$ is  sufficiently smooth.
During the last  years, the rough paths theory has allowed to handle several aspects of differential equations like (\ref{eq:1}), ranging from existence and uniqueness results (see \cite{FV,LQ} for equations of type  (\ref{eq:1}) and \cite{CF,GT,NNT} for extensions to other kind of systems) to density estimates \cite{CFV} or ergodic theorems \cite{Ha}. 

\smallskip

It is also important, and in fact at the very core of the rough path analysis, to  derive good numerical approximations for fractional differential equations like~(\ref{eq:1}). This problem has so far been considered in three type of situations: \textit{(i)} When $H>1/2$, it is proved independently in \cite{Da} and \cite{MS} that the Euler scheme associated to equation (\ref{eq:1}), based on a grid $\{iT/n;\, i\le n\}$, converges with the rate  $n^{-(2H-1)+ \varepsilon}$ for arbitrarily small $\varepsilon > 0$.  The exact rate of convergence of the Euler scheme is  computed in \cite{NN} in the particular case of a one-dimensional equation. \textit{(ii)} In the Brownian case $H=1/2$, there exists a huge amount of literature on approximation schemes for SDEs, and we just send the interested reader to the references \cite{KP, MT} for a complete overview of the topic. \textit{(iii)} For $1/3<H<1/2$, the rough path strategy in order to solve  equation (\ref{eq:1}), see e.g.  \cite{FV,Gu,LQ},  tells us that one should use at least a Milstein-type scheme 
in order to approximate its solution.  Moreover, it can be easily seen that for $H<1/2$ the standard Euler scheme does not converge and in fact explodes for stepsizes going to zero, even in the one-dimensional case. Indeed, consider for instance the one-dimensional SDE
$$ dX_t=X_t \, dB_t, \qquad X_0=1,$$
whose exact solution is given by $X_t=\exp(B_t)$. The Euler approximation for this equation at $t =1$ is given by
$$  X^{(n)}_1 = \prod_{k=0}^{n-1} (1+ (B_{(k+1)/n}-B_{k/n})).$$
So for $n \in \mathbb{N}$ sufficiently large and using a Taylor expansion, we have
\begin{align*}
 X_1 -   X^{(n)}_1 & = \exp(B_1) - \exp\Big( \sum_{k=0}^{n-1} \log  (1+ (B_{(k+1)/n}-B_{k/n})) \Big) \\
                              &= \exp(B_1) - \exp\Big (B_1 -\frac12 \sum_{k=0}^{n-1}|B_{(k+1)/n}-B_{k/n}|^2 + \rho_n \Big),
\end{align*}
where  $\rho_n \stackrel{{\rm Prob.}}{\longrightarrow} 0 $ for $n \rightarrow \infty$ for $H>1/3$. Now it is well known that 
$$ \sum_{k=0}^{n-1}|B_{(k+1)/n}-B_{k/n}|^2 \stackrel{{\rm a.s.}}{\longrightarrow} \infty$$
for $H<1/2$, so we have  $ X^{(n)}_1 \stackrel{{\rm Prob.}}{\longrightarrow} \infty $.
However, Milstein-type schemes are known to be convergent for such a one-dimensional equation, see \cite{GN}.

\smallskip

For the general multi-dimensional equations  of type (\ref{eq:1}), a Milstein-type scheme   is studied in  \cite{Da}: set $\overline{Y}_0 = a$, and for a grid given by $t_k=kT/n$, $k=0, \ldots, n-1$, let
\begin{align} \label{eq:misltein}
\overline{Y}_{t_{k+1}} =  & \overline{Y}_{t_k} +  \sum_{i=1}^n \sigma^{(i)} ( \overline{Y}_{t_{k}}) (B_{t_{k+1}} - B_{t_{k}})  \\ \qquad  &+ \sum_{i,j=1}^n \mathcal{D}^{(i)}\sigma^{(j)}  (\overline{Y}_{t_{k}}) \int_{t_k}^{t_{k+1}} (B^{(i)}_s -  B^{(i)}_{t_k}) \, d B_s^{(j)} \nonumber ,
\end{align}
for $k=0, \ldots, n-1$,
where $\mathcal{D}^{(i)}$ is the differential operator  $\sum_{l=1}^{d} \sigma_l^{(i)} \partial_{x_l}$. Davie then proves that  this scheme has convergence rate  $n^{-(3H-1) + \varepsilon }$, and this result has been extended in \cite{FV} in an abstract setting, to higher order schemes for a rough path with a given regularity.

\smallskip

The above Milstein-type scheme (\ref{eq:misltein}) requires  knowledge of the iterated integrals
\begin{align} X_t^{(i,j)}= \int_{0}^t  B^{(i)}_s\, d B_s^{(j)} , \qquad t \in [0,T], \quad i,j=1, \ldots n, \label{fla-1} \end{align}
whose explicit distribution is unknown for $i \neq j$. Thus discretization procedures for  (\ref{fla-1}) are  crucial for an implementation of this numerical method. This has already been addressed in \cite{CQ}, where dyadic linear approximations of the fBm $B$ are used in order to define a  Wong-Zakai-type approximation $\widehat{X}^{n}$ of $X$. In the last reference, the process $\widehat X^{n}$ is shown to converge almost surely in $p$-variation distance, and the  (non-optimal) error bound $$\be |\widehat{X}^{n}_T-X_T|^2 \leq C  \cdot 2^{-n(4H-1)/2}$$ is also determined. 
The current article takes up this kind of program, and we consider the approximation of 
\begin{align} \label{fla} X_t = \int_{0}^t B_s^{(1)} \, d B_s^{(2)}, \qquad t \in [0,T]  \end{align} 
by the Euler- and a {  trapezoidal } scheme based on equidistant discretizations.

 For the approximation of (\ref{fla}) the standard Euler method has the explicit expression
\beq\label{euler}
X^{n}_T= \sum_{i=0}^{n-1} \ B_{iT/n}^{(1)} \lp B_{(i+1)T/n}^{(2)}  -   B_{iT/n}^{(2)} \rp.
\eeq
The results we obtain for the Euler scheme are then of two kinds. First we determine the exact $L^2$-convergence rate.
\smallskip

\begin{thm}\label{thm:1.1} 
Let $X_T$  defined by (\ref{fla}) and its Euler approximation $X_T^n$ given by expression (\ref{euler}). Define the constants $\alpha_j(H)$, for $j=1,2,3$ by
\begin{multline*}
\alpha_1(H) =   \frac{H}{2}   \left( \beta(2H, 2H) +     \frac{1}{4H-1}  \right)
 + \frac{1}{2}  \left((1 - 2^{2H})  + \frac{2H-1}{4H-1}   + \frac{H 2^{4H}} {4H-1}  \right)  \\
 + H  \int_{0}^{1}  (  y^{2H}|  1+y|^{2H-1}   - y^{2H-1}|1+y|^{2H} ) \, d y
\end{multline*}
and
$$
\alpha_2(H) =   \alpha_1(H) + \frac{H^2(2H-1)^2}{2}  \zeta(4-4H), \qquad
\alpha_3(H) =  \frac{1}{4} \frac{H^2 (2H-1)}{4H-3}.
$$
Then we have
\begin{align*}
\EX |X_T -{X}_T^{n}   |^2 = \left \{ \begin{array}{rllcc}
\alpha_1(H)   \cdot T^{4H} \cdot n^{-4H+1} &+ &o( n^{-4H+1} )   &\textrm{for}   &  H \in (1/4, 1/2),  \\
\alpha_2(H) \cdot T^{4H} \cdot n^{-4H+1} &+& o( n^{-4H+1} ) & \textrm{for}   &  H \in (1/2, 3/4), \\
\frac{9}{128} \cdot T^{4H}\cdot  \log(n) n^{-2} &+ &o(\log(n) n^{-2}  ) & \textrm{for}   &  H = 3/4, \\
 \alpha_3(H) \cdot T^{4H}\cdot n^{-2} &+ &o( n^{-2} ) & \textrm{for}   & H \in (3/4,1).
\end{array} \right.
\end{align*}
\end{thm}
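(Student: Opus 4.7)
The plan is to reduce the problem to a careful analysis of a double Riemann-type sum built from the covariance of fractional Brownian motion, and then to identify three regimes according to whether certain off-diagonal sums converge, diverge logarithmically, or dominate.

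\textbf{Step 1: Reduction by scaling and conditioning.} By the scaling property $B_{Tu} \stackrel{d}{=} T^H B_u$, it suffices to prove the result for $T=1$; the factor $T^{4H}$ then pops out. Writing $t_i = i/n$, the error can be written as
\begin{equation*}
X_1 - X_1^{n} = \sum_{i=0}^{n-1} \int_{t_i}^{t_{i+1}} \bigl( B^{(1)}_s - B^{(1)}_{t_i} \bigr) \, dB^{(2)}_s,
\end{equation*}
where the integral is understood in the Young/Stratonovich sense for $H>1/2$ and in the rough-path/Skorohod sense for $H<1/2$ (the two coincide with a Wick/Wiener-chaos integral since $B^{(1)}\perp B^{(2)}$). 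Because $B^{(1)}$ and $B^{(2)}$ are independent, I would condition on $B^{(1)}$ first: conditionally, the inner object is a Wiener integral against $B^{(2)}$, whose variance can be expressed through the covariance kernel of $B^{(2)}$.

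\textbf{Step 2: Exact second-moment formula.} For $H>1/2$ the conditional covariance is $\EX[dB^{(2)}_s dB^{(2)}_u]=H(2H-1)|s-u|^{2H-2}\,ds\,du$, which after taking expectation over $B^{(1)}$ produces the double-sum identity
\begin{equation*}
\EX|X_1 - X_1^n|^2 = H(2H-1) \sum_{i,j=0}^{n-1} \int_{t_i}^{t_{i+1}}\!\!\int_{t_j}^{t_{j+1}} C_{i,j}(s,u)\,|s-u|^{2H-2}\, ds\, du,
\end{equation*}
with $C_{i,j}(s,u) = \EX[(B^{(1)}_s-B^{(1)}_{t_i})(B^{(1)}_u-B^{(1)}_{t_j})]$ expressed via the polarization formula $\tfrac12(|s-t_j|^{2H}+|u-t_i|^{2H}-|s-u|^{2H}-|t_i-t_j|^{2H})$. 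A parallel formula holds for $H\in(1/4,1/2)$ using the singular kernel in the RKHS of $B^{(2)}$ (equivalently by representing the Skorohod integral as a second-chaos element and applying the product formula); I would derive both cases side by side.

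\textbf{Step 3: Diagonal/off-diagonal split.} Isolate the diagonal $i=j$ contribution. By the self-similarity change of variables $s = t_i + u/n$, $u\in[0,1]$, the diagonal integrand rescales by $n^{-4H}$ and summing over $n$ identical terms yields precisely the $\alpha_1(H)\,n^{-4H+1}$ asymptotics; the explicit constant matches the expression in the theorem after rearranging using the Beta function identity for $\int_0^1 u^{2H}(1-u)^{2H-2}\,du$ and the boundary terms coming from the polarization. For the off-diagonal part $i\neq j$, set $k = j-i$ and $s = t_i + u/n$, $u\in[0,1]$, similarly for $t$. Then $C_{i,j}$ becomes $n^{-2H}$ times a bounded function of $(u,v,k)$ and $|s-u|^{2H-2}$ becomes $n^{2-2H}|k + (\text{small})|^{2H-2}$. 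The total factor is $n^{-4H+1}\cdot n^{-1}\sum_{k} g_n(k)$ for a sequence $g_n(k)$ asymptotic to $k^{4H-4}$ at infinity (obtained from a second-order Taylor expansion of the polarized covariance in $1/k$; the cancellations that kill the $O(k^{-1})$ and $O(k^{-2})$ terms are a routine but delicate check).

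\textbf{Step 4: The three regimes.} The off-diagonal structure $\sum_{k=1}^{n-1}(n-k)k^{4H-4}$ is the source of the transition. For $H<3/4$ the series $\sum_{k\geq 1}k^{4H-4}$ converges, yielding an additional $\tfrac12 H^2(2H-1)^2\zeta(4-4H)\,n^{-4H+1}$ term (present only for $H>1/2$, where the kernel sign issue does not kill it; this gives the jump from $\alpha_1$ to $\alpha_2$). For $H=3/4$ the series is logarithmically divergent, producing a $\log(n)\,n^{-2}$ term with constant $\tfrac{9}{128}=\tfrac12 H^2(2H-1)^2 = \tfrac12 \cdot \tfrac{9}{16}\cdot \tfrac14$; here the diagonal becomes lower order. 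For $H>3/4$ the sum grows like $n^{4H-2}$ and produces $\alpha_3(H)\,n^{-2}$, which I expect to come out of a genuine Riemann-sum limit $\int\int g(u,v)|u-v|^{2H-2}du\,dv\cdot n^{-2}$ after one more scaling, evaluated in closed form using $\int_0^1\int_0^1|u-v|^{2H-2}du\,dv = \frac{1}{H(2H-1)(4H-3)}$ on the relevant scale (this gives the factor $\tfrac{H^2(2H-1)}{4(4H-3)}$).

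\textbf{Main obstacle.} The hardest step is the uniform control of the remainder in the Taylor expansion of $C_{i,j}(s,u)$ for small $|i-j|$, which is where the constant $\alpha_1(H)$ acquires its many boundary terms, combined with the delicate interchange of summation and integration at the critical threshold $H=3/4$ where one must show that only the $\log(n)$ contribution survives and all finite-$k$ corrections are $o(\log n\cdot n^{-2})$. The $H<1/2$ case additionally requires justifying the second-chaos representation carefully enough that the off-diagonal singular kernel still yields a convergent expression; I would handle this by rewriting the divergence integral via its Stratonovich correction and checking that the correction is of lower order.
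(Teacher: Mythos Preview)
Your overall strategy coincides with the paper's: write $X_T-X_T^n=\sum_i J_i^n$ with $J_i^n=\mathcal{A}_{iT/n,(i+1)T/n}$, use self-similarity to get $\EX[J_i^nJ_j^n]=T^{4H}n^{-4H}\EX[I_iI_j]$ with $I_i=\mathcal{A}_{i,i+1}$, and then sort the double sum by $|i-j|$. The trichotomy in Step~4 is exactly the paper's, including the Riemann-sum evaluation that produces $\alpha_3(H)$ for $H>3/4$ and the harmonic-sum argument at $H=3/4$.

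However, Step~3 contains a concrete error. The diagonal $i=j$ does \emph{not} by itself produce $\alpha_1(H)$: it yields only
\[
c_1=\EX\bigl|\mathcal{A}_{01}\bigr|^2=\tfrac{H}{2}\Bigl(\beta(2H,2H)+\tfrac{1}{4H-1}\Bigr),
\]
which is just the first piece of $\alpha_1(H)$. Every remaining term in $\alpha_1(H)$ --- the $2^{2H}$ and $2^{4H}$ pieces and the integral $\int_0^1\bigl(y^{2H}|1+y|^{2H-1}-y^{2H-1}|1+y|^{2H}\bigr)\,dy$ --- comes from the \emph{secondary diagonal} $|i-j|=1$, for which the paper computes $c_2=\EX[\mathcal{A}_{01}\mathcal{A}_{12}]$ explicitly (their Lemma~3.1) and then identifies $\alpha_1(H)=c_1+2c_2$. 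Your Taylor-in-$1/k$ device is useless at $k=1$ because there is no small parameter; you must split into the three ranges $k=0$, $k=1$, $k\ge 2$ and compute the first two exactly. This is precisely the ``many boundary terms'' you flag as the main obstacle, but it is not a remainder estimate --- it is a separate closed-form calculation.

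Two smaller points. First, your reason for the $\zeta$-term being absent when $H<1/2$ (a ``kernel sign issue'') is not the mechanism: for $|i-j|\ge 2$ one has $\EX[I_iI_j]=H^2(2H-1)^2$ times a positive quadruple integral, so there is no sign cancellation; the paper argues instead by a direct summability bound on $\sum_{|i-j|>1}|\EX[J_i^nJ_j^n]|$. Second, the paper does not invoke any Stratonovich correction for $H<1/2$; it works throughout with the analytic-fBm regularization $K'(\eta;\cdot,\cdot)$ and lets $\eta\to 0$, which for the off-diagonal blocks $|i-j|\ge 2$ is harmless because the integration domains are separated.
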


\smallskip

Observe that for the case $H=1/2$, i.e. for the approximation of the Wiener L\'evy area,   one obtains by straightforward computations that
$  \EX |X_T -{X}_T^{n}   |^2 = \frac{T^2}{2} \cdot n^{-1},$ which is compatible with our  Theorem \ref{thm:1.1}, since
$$      \lim_{H \rightarrow 1/2, \, H<1/2} \alpha_1(H) =  \lim_{H \rightarrow 1/2, \, H>1/2} \alpha_2(H)  = \frac{1}{2}. $$

 The convergence rate breaks up into several regimes which are reminiscent of the cases obtained in \cite{NoNuTu, TV} concerning { weighted quadratic variations} of the one-dimensional fBm. In particular, the convergence rate  does not improve for $H \geq 3/4$, i.e. is equal to $n^{-1}$ independently of $H$. Finally, note that our study starts obviously at $H=1/4^+$, since the L\'evy area is not even defined for $H\le 1/4$.

\smallskip
Using a trapezoidal rule for the approximation of the integral leads to the following  scheme, which coincides with the Wong-Zakai approximation used in \cite{CQ}:
\beq \widehat{X}_T^n= \frac{1}{2}  \sum_{i=0}^{n-1} \left( B_{iT/n}^{(1)} + B_{(i+1)T/n}^{(1)} \right) \left( B_{(i+1)T/n}^{(2)}  -   B_{iT/n}^{(2)} \right). \eeq
This { trapezoidal} scheme avoids the "breakdown" of the convergence rate of the Euler scheme for $H \geq 3/4$.

\smallskip
 \begin{thm}\label{milstein} Let $H>1/2$.
Then we have
$$          \EX |X_T- \widehat{X}_T^n|^2 = \alpha_4(H) \cdot  T^{4H} \cdot n^{-4H+1}+ o(n^{-4H+1}) ,         $$
where
$$   \alpha_4(H)=  \  \EX  \int_1^2  \Big( B_{s_1} ^{(1)} - \frac{1}{2}(B_{1}^{(1)}-B_{2}^{(1)}) \Big) \, d B_{s_1} ^{(2)}  \int_0^3 \Big( B_{s_2} ^{(1)}- \frac{1}{2}(B_{1}^{(1)}-B_{2}^{(1)})  \big) \, d B_{s_2} ^{(2)}.  $$
\end{thm}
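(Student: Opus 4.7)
The plan is to exploit the $H$-self-similarity of the $2$-dimensional fBm to reduce the problem to a stationary sum of correlated block errors, and then to exhibit enough cancellation in those correlations to make them absolutely summable, so that the asymptotic follows from a Cesaro-type argument. To begin, since $H>1/2$ the pathwise integral $X_T=\int_0^T B_s^{(1)}\,dB_s^{(2)}$ is a Young integral and splits additively over the subintervals $[t_i,t_{i+1}]$ with $t_i=iT/n$, giving
\[
X_T-\widehat X_T^n=\sum_{i=0}^{n-1}Z_i^{(n)},\qquad Z_i^{(n)}:=\int_{t_i}^{t_{i+1}}\!\Big(B_s^{(1)}-\tfrac12(B_{t_i}^{(1)}+B_{t_{i+1}}^{(1)})\Big)\,dB_s^{(2)}.
\]
By the $H$-self-similarity of $(B^{(1)},B^{(2)})$, the joint law of $(Z_0^{(n)},\dots,Z_{n-1}^{(n)})$ equals that of $(T/n)^{2H}(W_0,\dots,W_{n-1})$, where
\[
W_i:=\int_i^{i+1}\!\Big(B_s^{(1)}-\tfrac12(B_i^{(1)}+B_{i+1}^{(1)})\Big)\,dB_s^{(2)}
\]
for a standard $2$-d fBm on $\R$. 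Stationarity of the increments yields $\rho_k:=\EX[W_0W_k]$ depending only on $|k|$, whence
\[
\EX|X_T-\widehat X_T^n|^2=T^{4H}n^{-4H}\sum_{i,j=0}^{n-1}\rho_{i-j}.
\]

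Next I would compute $\rho_k$ by the isometry for the integral against $B^{(2)}$. Since $B^{(1)}\perp B^{(2)}$, conditioning on $B^{(1)}$ and applying the second-moment formula for the Wiener/Young integral against fBm with $H>1/2$ (valid because the integrand is $B^{(1)}$-measurable) gives
\[
\rho_k = H(2H-1)\int_0^1\!\!\int_0^1 \EX\!\left[\phi_0(u)\phi_k(k+v)\right]\,|k+v-u|^{2H-2}\,du\,dv,
\]
with $\phi_i(s):=B_s^{(1)}-\tfrac12(B_i^{(1)}+B_{i+1}^{(1)})$; the inner expectation is an explicit linear combination of $R_H$ evaluated at $\{u,v,i,i+1,k,k+1\}$.

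The main technical step, and also the main obstacle, is to prove the absolute summability $\sum_{k\in\Z}|\rho_k|<\infty$. A direct bound $|k+v-u|^{2H-2}=O(k^{2H-2})$ together with $\mathrm{Var}(\phi_0)=O(1)$ only gives $\rho_k=O(k^{2H-2})$, which is not summable for $H>1/2$. The correct rate comes from Taylor-expanding both factors in the integrand as $k\to\infty$: the centering in $\phi_i$ forces the $k^{4H-4}$-coefficient of the product to be proportional to $(u-\tfrac12)(v-\tfrac12)$, which integrates to zero over $[0,1]^2$; the next coefficient (of order $k^{4H-5}$) turns out to be antisymmetric in $(u,v)$ and also vanishes upon integration. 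Consequently $\rho_k=O(k^{4H-6})$, which is absolutely summable for every $H\in(1/2,1)$.

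Once $\sum_k|\rho_k|<\infty$ is established, a standard Cesaro-type rearrangement gives $\sum_{i,j=0}^{n-1}\rho_{i-j}=n\sum_{k\in\Z}\rho_k+o(n)$, and therefore
\[
\EX|X_T-\widehat X_T^n|^2=T^{4H}n^{1-4H}\sum_{k\in\Z}\rho_k+o(n^{1-4H}).
\]
It only remains to recast the constant $\sum_{k\in\Z}\rho_k$ in the form $\alpha_4(H)$ stated in the theorem, which is done by an explicit resummation of the translation-invariant covariances into a single expectation of a product of two Young integrals against $B^{(2)}$ with a common centering $\tfrac12(B_1^{(1)}+B_2^{(1)})$.
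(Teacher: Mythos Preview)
Your decomposition via self-similarity into the stationary sum $T^{4H}n^{-4H}\sum_{i,j}\rho_{i-j}$ coincides with the paper's opening of Section~3.3. The treatment of the off-diagonal correlations, however, is genuinely different. The paper does \emph{not} establish absolute summability of $(\rho_k)$: it introduces a cutoff at $|i-j|=\log n$, imports for the far region a cancellation estimate on $\sum_{|i-j|>\log n}\int\!\!\int\theta_{i,j}\,ds_2\,ds_1$ from the external reference~[AN], combines it with a mean-value trick to absorb the weight $|s_1-s_2|^{2H-2}$, and bounds the intermediate band $1<|i-j|<\log n$ term by term via $|\theta_{i,j}|\lesssim|i-j|^{2H-2}$. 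The leading constant is then extracted from the band $|i-j|\le 1$ only.

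Your route --- proving $\rho_k=O(k^{4H-6})$ through two successive Taylor cancellations --- is correct: the $k^{4H-4}$ coefficient of the integrand is indeed proportional to $(u-\tfrac12)(v-\tfrac12)$ (this is precisely the leading behaviour of the paper's $\theta_{0,k}$), and the $k^{4H-5}$ coefficient is antisymmetric under $u\leftrightarrow v$. This makes the argument self-contained (no appeal to~[AN]) and yields the constant as the full series $\sum_{k\in\Z}\rho_k$ via a clean Cesaro step. The one place your sketch is genuinely incomplete is the final identification of $\sum_{k\in\Z}\rho_k$ with the stated $\alpha_4(H)$: the expression in the theorem, with a \emph{fixed} centering $\tfrac12(B_1^{(1)}+B_2^{(1)})$ carried across the whole range $\int_0^3$, does not literally equal either $\rho_0+2\rho_1$ or $\sum_k\rho_k$, so that matching requires some additional bookkeeping (and tolerance for minor typos in the statement).
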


\smallskip

Note that the constant $\alpha_4(H)$ could also be expressed in terms similar to $\alpha_1(H)$. However, we think that this gives no further insight and thus we omit it here.
We strongly suspect that the root mean square convergence rate $n^{-2H+1/2}$, which is obtained 
by this { trapezoidal} scheme, is the best possible. In other words, we conjecture that the conditional expectation of $X_T$ given $ B_{T/n} , B_{2T/n}, \ldots B_{T}$ satisfies
$$ \EX \big | X_T - \EX ( X_T \, | \, B_{T/n} , B_{2T/n}, \ldots B_{T} ) \big|^2 = C(H) \cdot T^{4H} \cdot n^{-4H+1}+ o(n^{-4H+1}) , $$ where  $C(H)>0$.

\smallskip

The third result in  this article is a refinement of Theorem \ref{thm:1.1}, meaning that we obtain a limit theorem for the asymptotic error distribution of the Euler scheme.

\smallskip

 \begin{thm}\label{thm:1.2} 
Let $X_T$, $X_T^n$ and $\alpha_1(H), \alpha_2(H), \alpha_3(H)$ defined as above. Moreover, let $Z$ be  a standard normal random variable.
Then :

\begin{enumerate}
\item Case $1/4<H\le 3/4$: the following central limit theorems hold:
\begin{align*}
\lim_{n\to\infty}\,  n^{2H-1/2}\, (X_T -{X}_T^{n})  \,  \stackrel{(d)}{=}\,  \left \{ \begin{array}{rlcc}
\sqrt{\alpha_1(H)}T^{2H} \cdot Z   &\textrm{for}   &  H \in (1/4, 1/2),  \\
\sqrt{\alpha_2(H)}T^{2H} \cdot Z & \textrm{for}   &  H \in (1/2, 3/4) 
\end{array} \right.
\end{align*}
and 
$$  \lim_{n\to\infty} \, n (\log(n))^{-1/2} \,  ( X_T- X_T^n )  \,   \stackrel{(d)}{=}  \,  \frac{3}{4 \sqrt{8}} T^{2H} \cdot Z $$ for $H=3/4$.

\item Case $H>3/4$:  let  $R_1$ and $R_2$ be two independent Rosenblatt processes  (see
Section~5 for a definition). 
Then it holds
 $$  \lim_{n \to \infty}\,  n \, (  X_T - X_T^n   ) \,  \stackrel{(d)}{=} \, \sqrt{2 \alpha_4(H)} T^{2H}  \cdot (R_1-R_2).$$

\end{enumerate}
\end{thm}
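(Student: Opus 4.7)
The plan rests on the algebraic identity
\[
X_T-X_T^n \;=\; \bigl(X_T-\widehat{X}_T^n\bigr)\;+\;\tfrac{1}{2}\sum_{i=0}^{n-1}\Delta_i B^{(1)}\,\Delta_i B^{(2)},
\]
where $\Delta_i B^{(k)}:=B^{(k)}_{(i+1)T/n}-B^{(k)}_{iT/n}$, obtained by a one-line comparison of the Euler and trapezoidal sums. Rotating to the independent fBms $U=(B^{(1)}+B^{(2)})/\sqrt 2$ and $V=(B^{(1)}-B^{(2)})/\sqrt 2$, the polarization identity $\Delta_i B^{(1)}\Delta_i B^{(2)}=\tfrac12(|\Delta_i U|^2-|\Delta_i V|^2)$ converts the correction sum into $\tfrac14(Y_n(U)-Y_n(V))$ with $Y_n(W):=\sum_i(|\Delta_i W|^2-\EX|\Delta_i W|^2)$. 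The analysis of $X_T-X_T^n$ therefore splits into (a) the trapezoidal error of Theorem~\ref{milstein}, and (b) centered quadratic variations of two \emph{independent} one-dimensional fBms, for which the classical Breuer-Major and Taqqu/Dobrushin-Major theorems are directly available.

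In the regime $H>3/4$ the trapezoidal piece is of strictly smaller order: $4H-1>2$, so Theorem~\ref{milstein} gives $n(X_T-\widehat{X}_T^n)\to 0$ in $L^2$, and only the correction survives. We are then in the long-memory regime for the Hermite-rank-$2$ functional $Y_n(W)$, and the Taqqu/Dobrushin-Major theorem yields $n\,Y_n(W)/T^{2H}\stackrel{(d)}{\to} c(H)\,R_W$ for a standard Rosenblatt random variable $R_W$ measurable with respect to $W$. Independence of $U$ and $V$ promotes $R_1,R_2$ to independent Rosenblatts, and we conclude
\[
n(X_T-X_T^n)\stackrel{(d)}{\to}\tfrac{T^{2H}}{4}\,c(H)\,(R_1-R_2).
\]
The factor $\tfrac14 c(H)$ is then identified with $\sqrt{2\alpha_4(H)}$ by matching second moments with Theorem~\ref{thm:1.1}.

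For $1/4<H\le 3/4$ both summands live at the same scale $n^{-(2H-1/2)}$ (with a $\sqrt{\log n}$ correction at $H=3/4$), and their \emph{joint} Gaussian convergence must be established. The trapezoidal piece $X_T-\widehat{X}_T^n$ is a second-chaos element of the Gaussian field generated by $(B^{(1)},B^{(2)})$; the fourth-moment theorem of Nualart-Peccati reduces its CLT to a vanishing contraction norm of its kernel, exactly of the same type as the estimate carried out in the proof of Theorem~\ref{milstein}. The correction piece is handled by the Breuer-Major theorem for Hermite rank $2$, applied separately to $U$ and $V$. Putting the two together via the Peccati-Tudor multidimensional fourth-moment theorem, and using the independence of $U,V$ to diagonalize the limiting covariance, produces a centered Gaussian with total variance $\alpha_1(H)T^{4H}$ for $H<1/2$ and $\alpha_2(H)T^{4H}$ for $1/2<H<3/4$, in agreement with Theorem~\ref{thm:1.1}. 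The critical case $H=3/4$ is treated identically, the $\sqrt{\log n}$ Breuer-Major normalization absorbing the trapezoidal error which remains of order $n^{-1}$ without any log factor.

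The main technical obstacle is the joint convergence step for $1/4<H<3/4$: although $U$ and $V$ are independent fBms, the trapezoidal-error kernel is \emph{not} diagonal in the $(U,V)$ basis, so its cross-covariance with $Y_n(U)-Y_n(V)$ must be computed explicitly in order to match the constants of Theorem~\ref{thm:1.1} — this amounts, in effect, to re-doing the variance calculation of Theorem~\ref{thm:1.1} along the $(U,V)$ decomposition. A secondary difficulty concerns the rough regime $1/4<H<1/2$: the $L^2$-rate for $X_T-\widehat{X}_T^n$ is not directly supplied by Theorem~\ref{milstein} in that range, and has to be re-established via an independent chaos-kernel computation consistent with the Skorohod/rough-path interpretation of $X_T$ needed to make the trapezoidal approximation well defined.
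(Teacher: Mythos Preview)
Your treatment of the regime $H\ge 3/4$ matches the paper exactly: the decomposition
$X_T-X_T^n=(X_T-\widehat X_T^n)+\tfrac12\sum_i\Delta_iB^{(1)}\Delta_iB^{(2)}$, the rotation to independent fBms $U,V$, the negligibility of the trapezoidal remainder via Theorem~\ref{milstein}, and the appeal to the non-central (resp.\ $\sqrt{\log n}$-central) limit for quadratic variations are precisely what is done in Section~\ref{sec:Hgeq34}.

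For $1/4<H<3/4$, however, the paper does \emph{not} pass through the trapezoidal decomposition at all. It applies the Nualart--Peccati fourth-moment criterion directly to the normalized error $Z_n=n^{2H-1/2}T^{-2H}\alpha_\ell(H)^{-1/2}\sum_i J_i^n$, with $J_i^n=\ca_{iT/n,(i+1)T/n}$. After self-similarity, $\EX[Z_n^4]$ is expanded by Wick's formula into nine four-point diagrams; three of them are disconnected and automatically sum to $3(\EX[Z_n^2])^2$, so the whole proof reduces to showing that the six \emph{connected} diagrams contribute $o(n^2)$. This is done by elementary power-counting when $H>1/2$ and, for $H<1/2$, by the analytic-extension machinery (Lemma~\ref{lem:exponents}) to control the non-integrable kernel $|x-y|^{2H-2}$. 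No joint convergence, no Peccati--Tudor, and no separate analysis of $X_T-\widehat X_T^n$ is needed.

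The two obstacles you flag are therefore self-inflicted. The cross-covariance between the trapezoidal error and $Y_n(U)-Y_n(V)$ is genuinely nonzero (the trapezoidal kernel is not diagonal in $(U,V)$), so matching $\alpha_1(H),\alpha_2(H)$ would force you to redo the full variance computation of Theorem~\ref{thm:1.1} inside the proof. More seriously, for $1/4<H<1/2$ you have no $L^2$-rate for $X_T-\widehat X_T^n$ at hand: Theorem~\ref{milstein} is stated only for $H>1/2$, and extending it below $1/2$ would require exactly the singular-kernel analysis (regularized kernels $K'(\eta;\cdot,\cdot)$ and contour-deformation estimates) that the paper uses in Section~\ref{sec:sing-terms} --- but applied there to the \emph{undecomposed} error, where the bookkeeping is simpler. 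In short, your route is viable in principle for $1/2<H<3/4$ but strictly harder than the paper's; for $H<1/2$ it has a real gap that, once filled, essentially reproduces the paper's direct argument anyway.
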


\smallskip

 Let us say a few words about the methodology we have adopted in order to prove Theorem~\ref{thm:1.2}. It should be mentioned first that we have used the analytic approximations introduced in \cite{Unt08a} in order to define the Lévy area $X$, which allows to use some elegant complex analysis methods for moments estimates in this context. Then, for $H \in (1/4,3/4)$, the central limit type results are obtained through the criterion introduced in \cite{NP} for random variables in a fixed chaos.  { For this we control  the  fourth moments of $X$ with the help of
(Feynman) diagrams.}  For the case $H\ge 3/4$ we proceed in a different way. Here the Milstein approximation of $X_T$ performs better than the Euler method. Then expressing the differences between both schemes as the sum of  quadratic variations  for two independent one-dimensional fBms, thanks to a simple geometrical trick given in \cite{No}, one obtains the limit theorems for $H \geq 3/4$ using the results of \cite{TV}.  In particular, this leads to the  Rosenblatt type limit distribution as in \cite{TV}.  For the  trapezoidal scheme, whose error behaves like the second  order quadratic variations of fBm, see e.g. \cite{Be}, a central limit theorem could be also derived using the criterion in \cite{NP}, but we omit this here  for the sake of conciseness.

\smallskip

The remainder of this article is structured as follows.  Integrals with respect to the fractional Brownian motion will always be understood as limits of analytic integrals as in \cite{Unt08a}. We thus recall the definition of the analytic fBm, as well as some preliminaries at Section \ref{sec:prelim}.   Section \ref{sec:mean-square} contains the proofs of Theorem {\ref{thm:1.1}} and  {\ref{milstein}}. The proof  of Theorem {\ref{thm:1.2}}
is given in  Sections \ref{sec:fourth-moments} and \ref{sec:Hgeq34}.

\section{Definition of the analytic fbm and preliminaries}\label{sec:prelim}

This section is devoted to recall the definition of the fractional Brownian motion introduced in \cite{Unt08a}, and to state some of the properties of this process which will be used in the sequel. All the random variables introduced here will be defined on a complete probability space $(\cu,\cf,\bp)$, without any further mention (notice the unusual notation $\cu$ for our probability space, due to the fact that the letter $\Omega$ will serve for the complex domains we consider in the sequel). The following kernels will also be essential for our future computations:
\begin{defn}[$\eta$-regularized power functions]\label{def:K'}
For $\beta\in\R\setminus\Z$ and $\eta>0$ let
\begin{equation*} 
[x]_{\eta}^{\pm,\beta}=(\pm\II x+\eta)^{\beta} 
\quad\mbox{and}\quad
[x]_{\eta}^{\beta}=2\Re [x]_{\eta}^{\pm,\beta}=[x]_{\eta}^{+,\beta}+[x]_{\eta}^{-,\beta}. 
\end{equation*}
Then, for $\eta>0$ and $x,y\in\R$, define $K^{',\pm}(\eta;x,y)$ as
$$
K^{',\pm}(\eta;x,y)=\frac{H(1-2H)}{2\cos\pi H} (\pm\II(x-y)+\eta)^{2H-2}
=\frac{H(1-2H)}{2\cos\pi H} [x-y]_{\eta}^{\pm,2H-2}.
$$
Set also
$$
K'(\eta;x,y):=2\Re K^{',\pm}(\eta;x,y)=K^{',+}(\eta;x,y)+K^{',-}(\eta;x,y).
$$
Notice that the above kernels  are well-defined on our prescribed domain $\R_+^*\times\R\times\R$.
\end{defn}

\subsection{Definition of the analytic fBm}
The article \cite{Unt08a} introduces the fractional Brownian motion as the real part of 
the trace on $\R$ of an analytic process $\gga$ (called: {\em analytic fractional
Brownian motion} \cite{TinUnt08}) defined on the complex upper-half plane $\Pi^+=\{z\in\C;\, \Im(z)>0\}$. This is achieved by first noticing that the kernel $K'(\eta)$ is  positive definite and represents (for every fixed $\eta>0$)
 the covariance of 
of a real-analytic centered Gaussian process with real time-parameter $t$. The easiest way to see it is to make use of the
following explicit series expansion: for $k\ge 0$ and $z\in\Pi^+$, set
\begin{equation} 
f_k(z)=2^{H-1} \sqrt{\frac{H(1-2H)}{2\cos\pi H}}
 \sqrt\frac{\mathbf{\Gamma}(2-2H+k)}{\mathbf{\Gamma}(2-2H) k!}
\left( \frac{z+\II}{2\II} \right)^{2H-2} \left( \frac{z-\II}{z+\II}\right)^k,  
\end{equation}
where $\mathbf{\Gamma}$ stands for the usual Gamma function. Then these functions are well-defined on $\Pi^+$, and it can be checked that one has
\begin{equation*} 
\sum_{k\ge 0} f_k\lp x+\II \frac{\eta_1}{2}\rp \overline{f_k\lp y+\II\frac{\eta_2}{2}\rp} = K^{',-}\lp\half\lp\eta_1+\eta_2\rp;x,y\rp.
\end{equation*} 
Define more generally  a Gaussian process with time parameter $z\in \Pi^+$ as follows:
\begin{equation} \label{eq:7}
\Gamma'(z)=\sum_{k\ge 0} f_k(z)\xi_k 
\end{equation}
 where $(\xi_k)_{k\ge 0}$ are independent standard complex
Gaussian variables, i.e. $\EX[\xi_j \xi_k]=0$, $\EX[\xi_j \bar{\xi}_k]=\delta_{j,k}$.
The Cayley transform $z\mapsto  \frac{z-\II}{z+\II}$ maps $\Pi^+$ to $\cd$, where $\cd$ stands for the unit disk of the complex plane. This allows to prove trivially that the series defining $\Gamma'$ is a random entire series which may
be shown to be analytic on the unit disk. Hence the process $\Gamma'$ is analytic on $\Pi^+$. Furthermore
note that, restricting to the horizontal line $\R+\II\frac{\eta}{2}$, the following identity holds true:
$$ 
\EX[\Gamma'(x+\II\eta/2)\overline{\Gamma'(y+\II\eta/2)}]=K^{',-}(\eta;x,y).
$$
 
\smallskip

One may now integrate the process $\Gamma'$ over any path $\gamma:(0,1)\to\Pi^+$ with endpoints $\gamma(0)=0$
and $\gamma(1)=z\in\Pi^+\cup\R$ (the result does not depend on the particular path but only on the endpoint $z$).
The result is a process $\Gamma$ which is still analytic on $\Pi^+$. Furthermore,
one may retrieve the fractional Brownian motion by considering the real part of the boundary value of $\Gamma$
on $\R$. Another way to look at it is to define $\Gamma_t(\eta):=\Gamma(t+\II\eta)$
 as a regular process living on $\R$, and to remark that the real part of $\Gamma(\eta)$ converges
when $\eta\to 0$ to fBm.  In the following Proposition, we give precise statements which
summarize what has been said up to now:

\begin{prop}[see \cite{Unt08a,TinUnt08}]
Let $\gga'$ be the process defined on $\Pi^+$ by relation (\ref{eq:7}).
\begin{enumerate}
\item
Let $\gamma:(0,1)\to\Pi^+$ be a continuous path with endpoints $\gamma(0)=0$ and $\gamma(1)=z$,
 and set $\Gamma_z=\int_{\gamma}\Gamma'_u \, du$. Then $\Gamma$ is an analytic process on $\Pi^+$.
 Furthermore, as $z$ runs along any path in $\Pi^+$ going to $t\in\R$, the random variables $\Gamma_z$
 converge almost surely to a random variable called again $\Gamma_t$.

\item

The family $\{\Gamma_t;\, t\in\R\}$ defines a  centered Gaussian complex-valued process whose
paths are almost surely $\kappa$-H\"older  continuous for any $\kappa<H$. Its real
part $B_t:=2\Re\Gamma_t$  has the same law as fBm.

\item
The family of centered Gaussian real-valued processes $B_t(\eta):=2\Re \Gamma_{t+\II\eta}$ converges a.s. to $B_t$ in $\alpha$-H\"older norm for any $\al<H$, on any interval of the form $[0,T]$ for an arbitrary constant $T>0$. Its infinitesimal
covariance kernel $\EX B'_x(\eta) B'_y(\eta)$ is $K'(\eta;x,y)$.

\end{enumerate}
\label{prop:Gamma-process}
\end{prop}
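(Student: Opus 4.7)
The plan is to deduce all three parts of the proposition from the explicit random series~\eqref{eq:7} by combining a little bit of complex analysis with standard Gaussian moment computations. First I would handle the analyticity statement in part~(1). The Cayley map $z\mapsto w=(z-\II)/(z+\II)$ sends $\Pi^+$ bijectively onto the open unit disk $\cd$, and up to the deterministic holomorphic prefactor $((z+\II)/2\II)^{2H-2}$ it turns~\eqref{eq:7} into a power series $\sum_k c_k\xi_k w^k$ with coefficients $c_k = O(k^{1/2-H})$ by Stirling applied to $\mathbf{\Gamma}(2-2H+k)/k!$. Since the Gaussian tail gives $\bp(|\xi_k|>a^k)\le e^{-a^{2k}/2}$ for any $a>1$, a Borel--Cantelli argument yields $\limsup_k|c_k\xi_k|^{1/k}\le 1$ almost surely, hence uniform convergence of~\eqref{eq:7} on compact subsets of $\Pi^+$. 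The almost sure analyticity of $\Gamma'$ follows, and since $\Pi^+$ is simply connected, Cauchy's theorem makes $\Gamma_z=\int_\gamma \Gamma'_u\,du$ independent of the particular path joining $0$ to $z$.

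The next and, in my view, most delicate step is the almost sure convergence of $\Gamma_z$ as $z$ runs along a path in $\Pi^+$ going to $t\in\R$. For this I would take two such sequences $(z_n),(\tilde z_n)$ and use the covariance identity displayed just before~\eqref{eq:7} to represent $\EX|\Gamma_{z_n}-\Gamma_{\tilde z_n}|^2$ as a double integral of $K^{',-}$ over the symmetric difference of the underlying paths. The explicit asymptotics of $K^{',-}$ near the boundary show that this kernel is locally integrable when $H>0$, so the right-hand side tends to zero and $(\Gamma_{z_n})$ is $L^2$-Cauchy. A Borel--Cantelli argument along a geometric sequence of approximants with $\Im z_n = 2^{-n}$ then upgrades this to the claimed almost sure convergence, the limit being independent of the chosen path by the same $L^2$-comparison.

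For part~(2), the centred Gaussian character of $\{\Gamma_t\}$ is inherited from $(\xi_k)_{k\ge 0}$ by passage to the limit. The covariance of $B_t=2\Re\Gamma_t$ is obtained by integrating $K'(\eta;\cdot,\cdot)$ twice over $[0,s]\times[0,t]$ and letting $\eta\downarrow 0$; a direct calculation based on the classical identity
$$\int_0^s\!\int_0^t|u-v|^{2H-2}\,du\,dv = \frac{1}{H(2H-1)}\,R_H(s,t),$$
valid for $H>1/2$ and extended to $H\in(0,1/2)$ by analytic continuation in $H$, shows that the prefactor $H(1-2H)/(2\cos\pi H)$ in Definition~\ref{def:K'} is calibrated precisely so as to recover $R_H(s,t)$. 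The same calculation delivers the Gaussian increment bound $\EX|\Gamma_t-\Gamma_s|^2 \le C|t-s|^{2H}$, so $\kappa$-H\"older continuity for every $\kappa<H$ is immediate from Kolmogorov's criterion.

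Part~(3) is the easiest of the three. The identity $\EX B'_x(\eta)B'_y(\eta) = K'(\eta;x,y)$ follows from $B'_t(\eta) = 2\Re\Gamma'(t+\II\eta)$ together with $\EX\xi_j\xi_k=0$, which forces $\EX\Gamma'(z)\Gamma'(w)=0$; only the Hermitian part of the covariance contributes, and it equals $K^{',-}(\eta;x,y)$ by the identity cited above~\eqref{eq:7}. For almost sure $\al$-H\"older convergence of $B(\eta)\to B$ on $[0,T]$ I would establish a Cauchy bound of the form
$$
\EX\bigl|(B_t(\eta_1)-B_s(\eta_1)) - (B_t(\eta_2)-B_s(\eta_2))\bigr|^2 \le C\,|\eta_1-\eta_2|^{2(H-\al')}\,|t-s|^{2\al'}
$$
for every $\al'\in(\al,H)$, which again reduces to a direct estimate on the double integral of $K'(\eta_1;\cdot,\cdot)-K'(\eta_2;\cdot,\cdot)$ over $[0,T]^2$. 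Garsia--Rodemich--Rumsey together with a Borel--Cantelli argument along $\eta_n=2^{-n}$ then closes the proof. The main technical obstacle throughout is the singular boundary behaviour of $K^{',\pm}$ in the second step; once it is controlled, the remainder of the argument is fairly routine Gaussian bookkeeping.
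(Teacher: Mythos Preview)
The paper does not actually prove this proposition: the header reads ``see \cite{Unt08a,TinUnt08}'' and no proof is given in the present article. Proposition~\ref{prop:Gamma-process} is quoted as background, with the analyticity claim and the series construction already sketched in the paragraphs preceding it. There is therefore no proof in the paper to compare your proposal against.

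That said, your outline is consistent with the informal argument the authors give before stating the proposition (Cayley transform, random entire series on the disk, Borel--Cantelli for the radius of convergence), and the covariance computation in part~(3) is exactly the one the paper alludes to when it writes $\EX[\Gamma'(x+\II\eta/2)\overline{\Gamma'(y+\II\eta/2)}]=K^{',-}(\eta;x,y)$. Your handling of the boundary limit in part~(1) and of the H\"older convergence in part~(3) via $L^2$ Cauchy bounds plus Borel--Cantelli along dyadic $\eta_n$ is a reasonable route; a full verification would of course require checking the kernel estimates in the cited references, but nothing in your sketch is obviously wrong or at odds with the construction described here.
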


\subsection{Definition of the L\'evy area}
Let us describe a natural possible definition of the Lévy area associated to $\gga$. Since the process $B_t(\eta):=2\Re \Gamma_{t+\II\eta}$ is a smooth one, one can define the following integral in the Riemann sense for all $0\le s<t$ and $\eta>0$:
\begin{equation}\label{eq:8a}
\ca_{st}(\eta)=\int_{s}^{t} dB_{u_1}^{(2)}(\eta) 
\int_{s}^{u_1} dB_{u_2}^{(1)}(\eta).
\end{equation}
It turns out that $\ca(\eta)$ converges in some Hölder spaces, in a sense which can be specified as follows. Let $T$ be an arbitrary positive constant, $\cac_j$ be the set of continuous complex-valued functions defined on $[0,T]^j$, and for $\mu>0$, define a space $\cac_2^\mu$ of $\mu$-Hölder functions on $[0,T]^2$ by
\begin{equation}\label{eq:9}
\|f\|_{\mu} :=
\sup_{s,t\in\ott}\frac{|f_{ts}|}{|t-s|^\mu}
\quad\mbox{and}\quad
\cac_2^\mu(V)=\lcl f \in \cac_2(\Omega;V);\, \|f\|_{\mu}<\infty  \rcl.
\end{equation}
The $\mu$-Hölder semi-norm for a function $g\in\cac_1$ is then defined by setting $h_{st}=g_t-g_s$ as an element of $\cac_2$, and $\|g\|_{\mu}:=\|h\|_{\mu}$ in the sense given by (\ref{eq:9}).

\smallskip

According to \cite{Unt08a,TinUnt08}, the L\'evy area $\ca$ of $B$ can then be defined in the following way:
\begin{prop}\label{prop:2.3}
Let $T>0$ be an arbitrary constant, and for $s,t\in\ott^2$, $\eta>0$, define $\ca_{st}(\eta)$ as in equation (\ref{eq:8a}). Consider also $0<\ga<H$. Then:
 
\smallskip

\noindent
{\bf (1)}
For any $p\ge 1$, the couple $(B(\eta),\ca(\eta))$ converges in $L^p(\Omega;\cac_1^{\ga}(\ott;\R)\times\cac_2^{2\ga}(\ott^2;\R))$ to a couple $(B,\ca)$, where $B$ is a fractional Brownian motion.
 
\smallskip

\noindent
{\bf (2)}
The increment $\ca$ satisfies the following algebraic relation: 
$$
\ca_{st}-\ca_{su}-\ca_{ut}=\big( B_t^{(2)}- B_u^{(2)}\big) \,
\lp B_u^{(1)}- B_s^{(1)}\rp,
$$
for $s,u,t\in\ott$.
\end{prop}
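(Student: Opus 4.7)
The plan is to treat the two assertions separately, leveraging the fact that for each fixed $\eta>0$ the process $\ca(\eta)$ has smooth sample paths, and then passing to the limit $\eta\to 0$. For assertion~(1), the key observation is that $B(\eta)$ and $\ca(\eta)$ belong to fixed Wiener chaoses over the Gaussian family $(\xi_k)_{k\ge 0}$: $B$ lies in the first chaos and $\ca$ in the second. Hypercontractivity (Kahane--Khintchine) ensures that on a fixed chaos all $L^p$ norms are equivalent to $L^2$, so it suffices to prove two two-parameter bounds of the form
\begin{equation*}
\EX\,|\ca_{st}(\eta)-\ca_{st}(\eta')|^2 \;\leq\; C\,(\eta+\eta')^{2\delta}\,|t-s|^{4\gamma}
\quad\text{and}\quad
\EX\,|\ca_{st}(\eta)|^2 \;\leq\; C\,|t-s|^{4\gamma}
\end{equation*}
for some $\delta>0$ and some $\gamma\in(0,H)$. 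The Garsia--Rodemich--Rumsey lemma then turns these pointwise moment estimates into bounds on the H\"older seminorm $\|\cdot\|_{2\gamma}$, and a Cauchy argument in $L^p(\cu;\cac_2^{2\gamma})$ produces the limit $\ca$; the corresponding convergence of $B(\eta)$ is already part of Proposition~\ref{prop:Gamma-process}.

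The second-moment computation reduces, by independence of $B^{(1)}$ and $B^{(2)}$ together with Isserlis' formula, to the quadruple integral
\begin{equation*}
\EX|\ca_{st}(\eta)|^2 \;=\; \int_s^t\!\!\int_s^t K'(\eta;u_1,v_1)\left(\int_s^{u_1}\!\!\int_s^{v_1} K'(\eta;u_2,v_2)\,du_2\,dv_2\right) du_1\,dv_1,
\end{equation*}
where each copy of the kernel $K'(\eta;x,y)=\frac{H(1-2H)}{\cos\pi H}\,\Re\,[x-y]_{\eta}^{2H-2}$ is the boundary value of a function analytic in the upper half-plane. Contour deformations in $\Pi^+$, in the spirit of \cite{Unt08a,TinUnt08}, allow one to factor out the singular part of each kernel, yield the desired $|t-s|^{4\gamma}$ scaling by a homogeneity argument, and provide the rate of vanishing of $\ca_{st}(\eta)-\ca_{st}(\eta')$ as $\eta,\eta'\to 0$.

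For assertion~(2), one fixes $\eta>0$ and works pointwise. Since $B(\eta)$ has $C^\infty$ sample paths, $\ca_{st}(\eta)$ is a genuine Riemann integral, and splitting the outer integration as $\int_s^t=\int_s^u+\int_u^t$ while using Chasles' relation in the inner variable gives
\begin{equation*}
\ca_{st}(\eta)-\ca_{su}(\eta)-\ca_{ut}(\eta) \;=\; \int_u^t \big(B^{(1)}_u(\eta)-B^{(1)}_s(\eta)\big)\,dB^{(2)}_{u_1}(\eta) \;=\; \big(B^{(1)}_u(\eta)-B^{(1)}_s(\eta)\big)\big(B^{(2)}_t(\eta)-B^{(2)}_u(\eta)\big).
\end{equation*}
This is exactly Chen's multiplicative relation for smooth iterated integrals. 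Passing to the limit $\eta\to 0$ in $L^p$ via assertion~(1) and the convergence of $B(\eta)$ preserves the identity and produces the desired relation for the limiting L\'evy area.

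The main obstacle is the uniform-in-$\eta$ second-moment bound. The apparent singularity of $K'(\eta;x,y)$ at $x=y$ as $\eta\to 0$ must be tamed through the analytic extension machinery, and the direction of the contour deformation must be chosen carefully according to the sign of $u_1-v_1$ and to whether $H<1/2$ or $H>1/2$ (recall that the prefactor $H(1-2H)$ changes sign at $H=1/2$, and the effective regularity of the kernel changes accordingly). Systematically tracking these cases to reach the sharp scaling $|t-s|^{4H}$ with a constant independent of $\eta$ is where the bulk of the technical work will lie.
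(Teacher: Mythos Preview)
The paper does not prove this proposition at all: it is stated with the preamble ``According to \cite{Unt08a,TinUnt08}, the L\'evy area $\ca$ of $B$ can then be defined in the following way'' and no argument is given. Your proposal is therefore not being compared to a proof in the paper but to the proofs in the cited references, and your outline is precisely the strategy carried out there: reduction to $L^2$ by hypercontractivity on a fixed chaos, uniform-in-$\eta$ second-moment bounds via the analytic-continuation machinery, Kolmogorov/GRR to obtain H\"older convergence, and Chen's relation obtained at the smooth level and passed to the limit. In that sense your approach is correct and coincides with the intended one.

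One small remark on presentation: the Cauchy-in-$\eta$ estimate you write as $\EX|\ca_{st}(\eta)-\ca_{st}(\eta')|^2 \le C(\eta+\eta')^{2\delta}|t-s|^{4\gamma}$ is slightly stronger than what is strictly needed and slightly awkward to obtain in that form. In \cite{Unt08a} the argument proceeds instead by showing that $\EX|\ca_{st}(\eta)|^2\le C|t-s|^{4H}$ uniformly in $\eta$ (which you also state) together with convergence of $\EX[\ca_{st}(\eta)\ca_{st}(\eta')]$ as $\eta,\eta'\to 0$; tightness in $\cac_2^{2\gamma}$ plus identification of finite-dimensional limits then suffices. Either route works, but if you attempt to extract an explicit rate in $\eta+\eta'$ you will find that the contour-deformation step does not hand it to you for free, so be prepared to weaken that formulation.
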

Notice that the algebraic property (2) in Proposition \ref{prop:2.3} is the one which qualifies $\ca$ to be a reasonable definition of the L\'evy area of $B$.

\smallskip

It  will be essential for us to estimate the moments of $\ca$. For this  we will use the following definition:
\begin{defn}\label{def:K}
For $\eta>0$ and $a_1,a_2\in\R$, let us define the function $K_{a_1,a_2}(\eta; \cdot,\cdot)$ on $
\R\times\R$ by
\beq\label{eq:10a}
K_{a_1,a_2}(\eta;x_1,x_2)=\int_{a_1}^{x_1} dy_1 \int_{a_2}^{x_2} dy_2 K'(\eta;y_1,y_2).
\eeq
Notice then that, invoking the conventions of Definition \ref{def:K'}, we have
\begin{multline}\label{eq:10}
K_{a_1,a_2}(\eta;x_1,x_2)=\frac{1}{4\cos(\pi H)}
\big( [x_1-x_2]_{\eta}^{2H}-[x_1-a_2]_{\eta}^{2H}-[a_1-x_2]_{\eta}^{2H}  \\
+[a_1-a_2]_{\eta}^{2H} \big).
\end{multline}
\end{defn}
We also state the classical Wick lemma for further use.
\begin{prop}
Let $Z=(Z_1,\ldots,Z_{2N})$ be a centered Gaussian vector. Then
\begin{equation} \EX[Z_1\cdots Z_{2N}]=\sum_{(i_1,i_2),\ldots,(i_{2N-1},i_{2N})}
 \prod_{j=1}^N \EX[Z_{i_{2j}} Z_{i_{2j+1}}] 
\end{equation}
where the sum ranges over the $(2N-1)!!=1 \cdot 3 \cdot 5\cdots (2N-1)$ couplings of the indices
$1,\ldots,2N$.
\label{prop:1:Wick}
\end{prop}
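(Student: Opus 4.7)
The plan is to reduce Wick's formula to the well-known explicit expression for the Laplace transform of a centered Gaussian vector and then extract coefficients by differentiation. I would start from the fact that for any centered Gaussian vector $Z=(Z_1,\ldots,Z_{2N})$ with covariance matrix $C_{ij}=\EX[Z_iZ_j]$ one has
\begin{equation*}
\EX\Big[\exp\Big(\sum_{i=1}^{2N} t_i Z_i\Big)\Big]
=\exp\Big(\frac{1}{2}\sum_{i,j=1}^{2N} t_i t_j C_{ij}\Big),
\end{equation*}
which is proven by completing the square in the Gaussian density, or by reducing to the one-dimensional case after diagonalizing $C$. Viewing this as an equality of absolutely convergent power series in $t_1,\ldots,t_{2N}$, the mixed moment $\EX[Z_1\cdots Z_{2N}]$ is then recovered by applying $\partial^{2N}/(\partial t_1\cdots\partial t_{2N})$ and evaluating at $t=0$.

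Next I would expand the right-hand side as
\begin{equation*}
\sum_{k\ge 0}\frac{1}{k!\,2^k}\Big(\sum_{i,j=1}^{2N} t_i t_j C_{ij}\Big)^k
\end{equation*}
and observe that the differential operator above annihilates every monomial of total degree distinct from $2N$, so only the $k=N$ summand survives. Developing that summand yields a sum over multi-indices $(i_1,j_1,\ldots,i_N,j_N)\in\{1,\ldots,2N\}^{2N}$ of products $C_{i_1 j_1}\cdots C_{i_N j_N}$, and the derivative picks out exactly those multi-indices whose underlying family $(i_1,j_1,\ldots,i_N,j_N)$ is a permutation of $(1,\ldots,2N)$.

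The final step is a combinatorial bookkeeping: each pairing $\{(a_1,b_1),\ldots,(a_N,b_N)\}$ of the set $\{1,\ldots,2N\}$ is produced by exactly $N!\cdot 2^N$ such permutations, since one may reorder the $N$ pairs and independently swap the two indices inside each pair. This factor cancels the prefactor $1/(N!\,2^N)$, leaving $\sum_{\mathrm{pairings}}\prod_{\ell=1}^N C_{a_\ell b_\ell}$, which is the announced Wick formula. The only slightly delicate point is the $N!\cdot 2^N$ overcounting in this last step; everything else is routine Taylor expansion. As a sanity check I would cross-verify the result by induction on $N$ via Gaussian integration by parts, namely $\EX[Z_{2N} F(Z_1,\ldots,Z_{2N-1})]=\sum_{k<2N}\EX[Z_{2N}Z_k]\,\EX[\partial_k F(Z_1,\ldots,Z_{2N-1})]$ applied to $F(z_1,\ldots,z_{2N-1})=z_1\cdots z_{2N-1}$: this isolates every possible partner for $Z_{2N}$ and reduces the $2N$-point moment to the sum over $(2N-2)$-point moments supplied by the induction hypothesis, recovering the same pairing expansion.
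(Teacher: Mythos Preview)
Your proof is correct and complete: the moment-generating-function argument followed by the $N!\cdot 2^N$ overcounting is the standard derivation of Isserlis' theorem, and your induction via Gaussian integration by parts is an equally valid alternative. The paper, however, does not prove this proposition at all; it simply states the classical Wick lemma for later use, so there is nothing to compare against beyond noting that you have supplied a full argument where the paper relies on the reader's prior knowledge.
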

We can now give the announced expression for the moments of $\ca(\eta)$ (recall that $\ca(\eta)$ is defined by (\ref{eq:8a})):
\begin{lem}
Let $N\ge 1$ and $\{s_i,t_i;\, i\le 2N\}$ be a family of real numbers satisfying $s_i<t_i$. Then
\begin{align} \label{eq:11}
&\EX\left[\prod_{j=1}^{2N}
{\mathcal A}_{s_j,t_j}(\eta) \right]=  
\int_{s_1}^{t_1} dx_1 \cdots \int_{s_{2N}}^{t_{2N}} dx_{2N} \\
&   \sum_{(i_1,i_2),\ldots,(i_{2N-1},i_{2N})}
\sum_{(j_1,j_2),\ldots,(j_{2N-1},j_{2N})}  
\prod_{k=1}^N K'(\eta;x_{i_{2k-1}},x_{i_{2k}}) \ .\
\prod_{k=1}^N K_{s_{j_{2k-1}},s_{j_{2k}}}(\eta;x_{j_{2k-1}},x_{j_{2k}}). \nonumber
\end{align}
\label{lemma:1:Area2N}
\end{lem}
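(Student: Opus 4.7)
The plan is to expand each factor as a Riemann integral against $dB^{(2)}(\eta)$, exploit independence of $B^{(1)}$ and $B^{(2)}$ to factor the expectation, and finish by applying Wick's formula (Proposition \ref{prop:1:Wick}) to each of the two resulting $2N$-point Gaussian correlators.

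More concretely, performing the inner integral in (\ref{eq:8a}) yields
$$\mathcal{A}_{s_j,t_j}(\eta)=\int_{s_j}^{t_j}\bigl(B^{(1)}_{x_j}(\eta)-B^{(1)}_{s_j}(\eta)\bigr)\,dB^{(2)}_{x_j}(\eta),$$
and for $\eta>0$ one has $dB^{(2)}_{x_j}(\eta)=B^{(2)\prime}_{x_j}(\eta)\,dx_j$ since $B(\eta)$ is real-analytic by Proposition \ref{prop:Gamma-process}. Taking the product over $j=1,\ldots,2N$, swapping expectation with the $2N$-fold integral over $\prod_j[s_j,t_j]$ (Fubini applies directly: on the compact product domain the integrand is jointly continuous with uniformly bounded moments), and using independence of $B^{(1)}$ and $B^{(2)}$ to factor, I obtain
$$\EX\Big[\prod_{j=1}^{2N}\mathcal{A}_{s_j,t_j}(\eta)\Big]=\int_{s_1}^{t_1}\!\!\cdots\!\int_{s_{2N}}^{t_{2N}}\!dx_1\cdots dx_{2N}\;\mathcal{E}^{(2)}(\mathbf{x})\,\mathcal{E}^{(1)}(\mathbf{x}),$$
with $\mathcal{E}^{(2)}(\mathbf{x})=\EX\bigl[\prod_{j=1}^{2N}B^{(2)\prime}_{x_j}(\eta)\bigr]$ and $\mathcal{E}^{(1)}(\mathbf{x})=\EX\bigl[\prod_{j=1}^{2N}\bigl(B^{(1)}_{x_j}(\eta)-B^{(1)}_{s_j}(\eta)\bigr)\bigr]$.

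Now Wick's lemma expands each factor as a sum over pairings of products of two-point functions. For $\mathcal{E}^{(2)}$, item (3) of Proposition \ref{prop:Gamma-process} identifies the covariance as $K'(\eta;x,y)$, giving the sum indexed by $(i_1,i_2),\ldots,(i_{2N-1},i_{2N})$ that appears in (\ref{eq:11}). For $\mathcal{E}^{(1)}$, the covariance is recovered by writing the two differences as integrals of derivatives:
$$\EX\bigl[(B^{(1)}_{x_1}(\eta)-B^{(1)}_{s_1}(\eta))(B^{(1)}_{x_2}(\eta)-B^{(1)}_{s_2}(\eta))\bigr]=\int_{s_1}^{x_1}\!\int_{s_2}^{x_2}K'(\eta;y_1,y_2)\,dy_1dy_2=K_{s_1,s_2}(\eta;x_1,x_2),$$
the last equality being Definition \ref{def:K}. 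Wick's formula then produces the sum indexed by $(j_1,j_2),\ldots,(j_{2N-1},j_{2N})$, and combining the two sums reproduces (\ref{eq:11}) exactly.

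No step presents a genuine obstacle: once the expansion is set up, the argument is a purely mechanical unfolding of definitions. The only care needed is bookkeeping, namely that $i$-pairings live on the outer ($B^{(2)}$) integration variables and therefore contribute \emph{unintegrated} kernels $K'$, whereas $j$-pairings live on the inner ($B^{(1)}$) increments and hence contribute the \emph{doubly-integrated} kernels $K_{s_{j_1},s_{j_2}}$; swapping those two roles would produce a wrong identity.
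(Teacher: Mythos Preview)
Your proof is correct and follows essentially the same route as the paper's: write each $\mathcal A_{s_j,t_j}(\eta)$ as a double integral of derivatives, interchange expectation and integration, and apply Wick's formula together with the identification of the two covariances with $K'$ and $K_{a,b}$. The only cosmetic difference is that you first perform the inner $B^{(1)}$-integral to obtain increments and then invoke independence to factor the expectation, whereas the paper keeps both integrals explicit and lets the vanishing cross-covariances do the factoring inside a single Wick expansion; the content is identical.
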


\begin{proof}
By definition of the approximation $\ca(\eta)$, we have
\beq\label{eq:13}
\EX\left[\prod_{j=1}^{2N}
{\mathcal A}_{s_j,t_j}(\eta) \right]
 = \prod_{j=1}^{2N} \int_{s_j}^{t_j} dx_j \int_{s_j}^{x_j} dy_j 
 \EX\left[B_{x_1}^{'(1)}(\eta) B_{y_1}^{'(2)}(\eta)
\cdots B_{x_{2N}}^{'(1)}(\eta) B_{y_{2N}}^{'(2)}(\eta) \right] 
\eeq
Our claim stems then from a direct application of  Proposition \ref{prop:Gamma-process} point (3), Proposition~\ref{prop:1:Wick} and Definition \ref{def:K}. 

\end{proof}

\subsection{Analytic preliminaries}
We gather here some elementary integral estimates which turn out to be essential  for our computations. The first one concerns the behavior of the kernel $K_{a_1,a_2}$ given at Definition \ref{def:K'} when $|a_1-x_1|,|a_2-x_2|$ are of order $1$ and $|x_1-x_2|$ is large.
\begin{lem} \label{lem:K}
Assume $\eta,|x_1-a_1|,|x_2-a_2|\le 1$ and
$|x_1-x_2|,|a_1-x_2|,|a_2-x_1|,|a_1-a_2|$ are bounded from below by a positive constant $C$. Then
$$
|K_{a_1,a_2}(\eta;x_1,x_2)|\le C
\left( \min(|x_1-x_2|,|a_1-x_2|,|a_2-x_1|,|a_1-a_2|)\right)^{2H-2}.
$$
\end{lem}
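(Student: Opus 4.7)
The plan is to exploit the double-integral representation \eqref{eq:10a} of $K_{a_1,a_2}$ rather than the explicit boundary formula \eqref{eq:10}. Concretely, I would start from
\begin{equation*}
K_{a_1,a_2}(\eta;x_1,x_2)=\int_{a_1}^{x_1}\int_{a_2}^{x_2} K'(\eta;y_1,y_2)\, dy_2\, dy_1
\end{equation*}
and use the pointwise bound
\begin{equation*}
|K'(\eta;y_1,y_2)| \le \frac{H|1-2H|}{|\cos\pi H|}\bigl((y_1-y_2)^2+\eta^2\bigr)^{H-1} \le C_H\, |y_1-y_2|^{2H-2},
\end{equation*}
which follows directly from Definition~\ref{def:K'} (the second inequality uses $\eta\le |y_1-y_2|$, valid once the rectangle is separated from the diagonal).

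The key geometric observation is the following. Write $r:=\min(|x_1-x_2|,|a_1-x_2|,|a_2-x_1|,|a_1-a_2|)$, and assume WLOG $a_1\le x_1$, $a_2\le x_2$. Since $|x_i-a_i|\le 1$ for $i=1,2$, the four corner values $a_1-a_2,\,a_1-x_2,\,x_1-a_2,\,x_1-x_2$ of the function $(y_1,y_2)\mapsto y_1-y_2$ on the rectangle $[a_1,x_1]\times[a_2,x_2]$ differ from each other by at most $2$. Taking $C$ larger than a fixed constant (which is the regime of interest in applications, and can be arranged by enlarging the constant in the conclusion), they must all share a common sign, so $y_1-y_2$ keeps that sign throughout the rectangle, and consequently $|y_1-y_2|\ge r$ everywhere on it.

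Plugging this in and integrating yields
\begin{equation*}
|K_{a_1,a_2}(\eta;x_1,x_2)| \le C_H \int_{a_1}^{x_1}\int_{a_2}^{x_2} |y_1-y_2|^{2H-2}\, dy_2\, dy_1 \le C_H\, |x_1-a_1|\,|x_2-a_2|\, r^{2H-2} \le C_H\, r^{2H-2},
\end{equation*}
using $|x_i-a_i|\le 1$ in the last step. This gives the desired inequality.

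The main (minor) obstacle lies in the borderline case where $r$ is only slightly larger than $|x_i-a_i|$, so that the rectangle could in principle straddle the diagonal $y_1=y_2$. In that regime the naive pointwise bound on $K'$ fails (for $H<1/2$ it is not even locally integrable), so I would handle it separately via the explicit four-term expression \eqref{eq:10}: the triangle inequality gives $|K_{a_1,a_2}(\eta;x_1,x_2)|\le C_H$ for bounded arguments, while $r^{2H-2}$ is then also bounded below by a constant depending only on the lower bound $C$. Thus the claimed estimate holds in both regimes, with the implicit constant depending on $H$ and on the prescribed lower bound $C$.
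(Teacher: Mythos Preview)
Your argument is correct and is exactly the approach the paper sketches: its one-line proof points to the integral expression for $K_{a_1,a_2}$, and you have carried this out by bounding $|K'(\eta;y_1,y_2)|\le C_H|y_1-y_2|^{2H-2}$ on the integration rectangle (this bound in fact holds for every $\eta>0$, since $H-1<0$, so your side condition $\eta\le|y_1-y_2|$ is unnecessary). Your handling of the borderline case via \eqref{eq:10} is also fine, since if the rectangle meets the diagonal then all four corner differences have modulus at most $2$, giving both $r\le 2$ (hence $r^{2H-2}\ge 2^{2H-2}$) and a uniform bound on the four boundary terms.
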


\begin{proof}
The proof is elementary using the integral expression (\ref{eq:10}) for $K_{a_1,a_2}$.

\end{proof}

\smallskip

We shall also need to estimate convolution integrals of the form
$\int_0^t K'(\eta;z,u) f(u)\ du$ or $\int_0^t K_{a,b}(\eta;z,u) f(u)\ du$.
 The following lemma gives a precise answer when $f$ is analytic on a neighborhood of $(a,b)$ for two given constants $a,b\in\R$, and multivalued with a power behavior near $a$ and $b$.

\begin{lem} (see \cite{Unt08b})
Fix two real constants $a,b$ with $a<b$, and let $f$ be a function in $L^1([a,b],\C)$. Define another function $\phi$ by $\phi:z\mapsto \int_a^b (-\II(z-u))^{\beta}
(u-a)^{\gamma}  f(u)\ du$ with $\gamma>-1$ and $\beta+\gamma\in\R\setminus\Z$. Then:
\begin{enumerate}
\item Assume $f$ is analytic in a (complex) neighborhood  of $s\in (a,b)$. Then $\phi$ has
an analytic extension to a complex neighborhood of $s$.

\item Assume $f$ is analytic in a complex neighborhood  of $a$. Then $\phi$ may be written
on a small enough neighborhood of $a$ as the multivalued function
\begin{equation} \phi(z)=(z-a)^{\beta+\gamma+1} F(z)+G(z) \end{equation}
where both $F$ and $G$ are analytic.

\item More precisely, the following continuity property holds: let $\Omega$
be a complex neighborhood of $[a,b]$ and
$\ep\in (0,1/2)$. If $f$
is analytic on a relatively compact domain $\tilde{\Omega}$ containing the closure $\bar{\Omega}$
of $\Omega$, then $\phi$ extends analytically to the cut domain
$\Omega_{{\rm cut}}:=\Omega\setminus((a+\R_-)\cup (b+\R_+))$ and writes
$(z-a)^{\beta+\gamma+1} F(z)+G(z)$ on $B(a,\ep (b-a))$ ($F,G$ analytic)
with
\begin{equation}
\sup_{\Omega_{{\rm cut}}\setminus(B(0,\ep (b-a))\cup B(b,\ep (b-a)))}
    |\phi|, \sup_{B(a,\ep (b-a))} |F|, \sup_{B(a,\ep (b-a))} |G|
\le C \sup_{\tilde{\Omega}} |f|
\end{equation}
for some constant $C$ which does not depend on $f$.
\end{enumerate}

\label{lem:exponents}

\end{lem}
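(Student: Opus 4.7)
The plan is to establish the three parts in sequence, combining contour deformation in the integration variable $u$ (for (1) and the ``far'' piece in (2)) with an explicit scaling/Taylor computation near $a$ (for the ``near'' piece in (2)), then extract the quantitative bounds of (3) by tracking constants through every step.

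\textbf{Part (1).} Given $s \in (a,b)$, choose $\rho > 0$ so small that the closed disk of radius $2\rho$ around $s$ lies in a complex neighborhood $V$ of $s$ on which $f$ is analytic, and is disjoint from $\{a,b\}$. The integrand, viewed as a function of $u$, is holomorphic on $V \setminus \{z\}$, so by Cauchy's theorem the sub-segment $[s-\rho, s+\rho]$ of the original path may be deformed to a semicircular arc $\Gamma_\rho^-\subset \Pi^-$ with the same endpoints, without changing $\phi(z)$ as long as $z$ stays off $\Gamma_\rho^-$. For $z$ in the ball $|z-s| < \rho/2$, the deformed integral depends holomorphically on $z$, since the factor $(-\II(z-u))^\beta$ remains bounded and holomorphic in $z$ uniformly for $u \in \Gamma_\rho^-$. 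A symmetric deformation into $\Pi^+$ handles $z$ approaching from below.

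\textbf{Part (2).} Pick $r > 0$ small enough that $f$ is analytic on $\{|u-a| \le 2r\} \subset \tilde{\Omega}$. Split $\phi = \phi_{\mathrm{loc}} + \phi_{\mathrm{far}}$ with $\phi_{\mathrm{loc}}$ integrated on $[a, a+r]$. Part (1) applied along $[a+r, b]$ shows $\phi_{\mathrm{far}}$ extends analytically to a neighborhood of $a$; call this part $G_1$. For $\phi_{\mathrm{loc}}$, change variables $u = a+t$, set $w = z-a$, and Taylor-expand $f(a+t) = \sum_{k \ge 0} c_k t^k$ with $|c_k| \le (2r)^{-k} \sup_{\tilde{\Omega}} |f|$. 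Exchanging sum and integral gives
$$
\phi_{\mathrm{loc}}(z) \;=\; (-\II)^\beta \sum_{k \ge 0} c_k \int_0^r (w-t)^\beta\, t^{\gamma+k}\, dt.
$$
The crucial model identity is that each integral decomposes as $A_k\, w^{\beta+\gamma+k+1} + B_k(w)$, with $A_k \in \C$ an explicit constant and $B_k$ analytic on $|w| < r$. To see this, write $\int_0^r = \int_0^\infty - \int_r^\infty$ (justified, for parameters with $\Re(\beta+\gamma+k+1) < 0$, by convergence at $\infty$; the general case follows by analytic continuation in $\beta$, allowed since $\beta + \gamma \notin \Z$ avoids logarithmic resonances). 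The scaling $t = ws$ in $\int_0^\infty$ produces $w^{\beta+\gamma+k+1}$ times an explicit Beta integral, while $\int_r^\infty$ expands in a uniformly convergent binomial series in $w/t$ for $|w| < r$. Checking that $A_k$ and the coefficients of $B_k$ grow at most polynomially in $k$, the geometric decay of $c_k$ lets us sum in $k$ and obtain $\phi_{\mathrm{loc}}(z) = w^{\beta+\gamma+1} F(z) + G_2(z)$ with $F, G_2$ analytic near $a$. Setting $G = G_1 + G_2$ completes (2).

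\textbf{Part (3).} The choices of $\rho$ and $r$, the deformed contours, and the Cauchy bound on $|c_k|$ depend only on the geometry of $\Omega \subset \tilde{\Omega}$, not on $f$. Summing the resulting geometric series in the series representations of $F$, $G$, and the deformed expression for $\phi$ yields the claimed uniform bounds linear in $\sup_{\tilde{\Omega}} |f|$. The hard part will be the uniform-in-$k$ extraction of the $w^{\beta+\gamma+k+1}$ piece from each model integral: the contour deformations justifying the $\int_0^\infty$ reduction must be compatible with the analytic continuation in $\beta$, and polynomial-in-$k$ control of $A_k$ and $B_k$ requires careful use of the reflection formula for the Gamma function (so that, e.g., $|\Gamma(-\beta-\gamma-k-1)|$ decays like $1/k!$ rather than blowing up). Once this bookkeeping is in hand, the remaining arguments reduce to routine summation.
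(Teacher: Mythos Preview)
Your approach is sound and fills in considerably more detail than the paper itself, which simply defers Parts~(1) and~(2) to \cite{Unt08b} (Lemmas~3.2--3.3) and obtains~(3) by remarking that one may track constants through that argument; the only method named is ``contour deformation.'' Your treatment of~(1) is exactly this. For~(2), the paper (via \cite{Unt08b}) stays with contour deformation throughout, whereas you switch to a Taylor expansion of $f$ near $a$ combined with a direct analysis of the model integrals $\int_0^r (w-t)^\beta t^{\gamma+k}\,dt$ via the $\int_0^\infty-\int_r^\infty$ splitting. Both routes yield the decomposition; yours is more concrete and makes the constants in~(3) directly visible, at the cost of the analytic-continuation-in-$\beta$ bookkeeping you correctly flag as the delicate point.

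Two small cautions. First, for $k\ge 1$ the region where $\int_0^\infty$ converges at infinity is disjoint from $\Re\beta>-1$, so if $w$ is real the integral is never simultaneously convergent at $t=w$ and at $t=\infty$; you must establish the identity first for $w\notin\R$ (so that $t=w$ is off the contour) with $\Re\beta$ very negative, and only then continue in $\beta$ and in $w$. Make this order of operations explicit. Second, in~(1) the integrand has a branch cut emanating from $u=z$, not an isolated singularity, so ``holomorphic on $V\setminus\{z\}$'' should be replaced by the correct statement about the cut; also, a single deformation into $\Pi^-$ already yields a two-sided analytic extension near $s$, so the separate deformation into $\Pi^+$ is unnecessary.
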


\begin{proof}
Points (1) and (2) follow directly from \cite{Unt08b}, Lemmas 3.2 and 3.3. Point (3)
may be shown very easily by following the proof of the above two lemmas step
by step and using the analyticity of $f$. Note that (under the hypotheses
of (3)) $\phi$ is analytic on the larger domain
 $\tilde{\Omega}\setminus ((a+\R_-)\cup
(b+\R_+))$, but the method of contour deformation used in the proof
gives a bound for $\phi(z)$ which goes to infinity when $z$ comes closer
and closer to the boundary of $\tilde{\Omega}$ (hence the need for the
relatively compact inclusion of $\Omega$ into $\tilde{\Omega}$).

\end{proof}

We shall also need the following elementary lemma. Here and later on, we will write $x \lesssim y$ for $x,y \in \mathbb{R}$, if there exists a constant $C>0$ such that $x \leq C \cdot y$.

\begin{lem}

{\it Let $\alpha,\beta>-1$ and $0<a<b<1$. Then:
\beq \int_0^1 |t-a|^{\alpha} |t-b|^{\beta}\ dt\lesssim 1+|a-b|^{\alpha+\beta+1}. \eeq
}
\label{lem:2:alphabetaab}
\end{lem}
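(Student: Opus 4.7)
The plan is to exploit the natural length scale $d:=b-a$. I would partition $[0,1]$ into three pieces: a neighborhood $R_A:=\{t\in[0,1]:|t-a|\le d/2\}$ of $a$, a neighborhood $R_B:=\{t\in[0,1]:|t-b|\le d/2\}$ of $b$, and the complement $R_C:=[0,1]\setminus(R_A\cup R_B)$. On $R_A$ the factor $|t-a|^{\alpha}$ carries the singularity while $|t-b|$ is of order $d$; on $R_B$ the roles are reversed; on $R_C$ the two factors are of the same order, both bounded below by $d/2$.

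For $R_A$, the triangle inequality gives $|t-b|\in[d/2,3d/2]$, so $|t-b|^{\beta}$ is comparable to $d^{\beta}$ regardless of the sign of $\beta$. Since $\alpha>-1$, one has $\int_{R_A}|t-a|^{\alpha}\,dt\lesssim d^{\alpha+1}$, and therefore the $R_A$-contribution is $\lesssim d^{\alpha+\beta+1}$. The symmetry $a\leftrightarrow b$ delivers the same bound on $R_B$.

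The main step is the analysis on $R_C$. There one has $|t-a|,|t-b|\ge d/2$ and moreover $\bigl||t-a|-|t-b|\bigr|\le d\le 2\min(|t-a|,|t-b|)$, so $|t-a|$ and $|t-b|$ are comparable up to a universal constant. The integrand therefore behaves essentially like $|t-a|^{\alpha+\beta}$, and the $R_C$-estimate reduces (taking both sides of $a$) to controlling $\int_{d/2}^{1}s^{\alpha+\beta}\,ds$. When $\alpha+\beta>-1$ this integral is bounded by the finite constant $1/(\alpha+\beta+1)$, producing the $1$ in the claimed bound; when $\alpha+\beta<-1$ it is of order $d^{\alpha+\beta+1}$, which fits into the other term.

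Summing the three contributions yields $\int_0^1|t-a|^{\alpha}|t-b|^{\beta}\,dt\lesssim 1+d^{\alpha+\beta+1}$, as desired. The one delicate point is the borderline case $\alpha+\beta=-1$, in which the $R_C$-integral degenerates to $\log(2/d)$ and is not quite absorbed by the stated bound; this case is avoided in the paper's subsequent uses (the exponents coming from $H\in(1/4,1)$ never produce $\alpha+\beta=-1$), so one may either restrict the statement accordingly or handle it by a separate logarithmic estimate.
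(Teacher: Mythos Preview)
Your argument is correct and follows essentially the same route as the paper: both decompose $[0,1]$ at the scale $d=b-a$ into neighborhoods of $a$ and $b$ (where one factor is $\sim d$) and a complement (where $|t-a|\sim|t-b|$), the only cosmetic difference being that the paper uses five subintervals with cutoffs at distance $d$ rather than your three regions of radius $d/2$. Your remark on the borderline $\alpha+\beta=-1$ is also apt, since the paper's own proof treats only the cases $\alpha+\beta+1>0$ and $\alpha+\beta+1<0$ and thus silently excludes that case as well.
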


\begin{proof}

Let $\sigma_a(b)=\max(0,2a-b)$ and $\sigma_b(a)=\min(1,2b-a)$. Split the above integral into
$\int_0^{\sigma_a(b)}+\int_{\sigma_a(b)}^a+\int_a^b+\int_b^{\sigma_b(a)}+\int_{\sigma_b(a)}^1$.
We show that the integral over each subinterval is $\lesssim 1+|a-b|^{\alpha+\beta+1}$ (by
symmetry, it is sufficient to check this for the three first subintervals only). Now, a simple study of the function $t\mapsto |t-a|/|t-b|$ shows that
$c<\frac{|t-a|}{|t-b|}<C$ on $[0,\sigma_a(b)]$, so 
\beq \int_0^{\sigma_a(b)}  |t-a|^{\alpha} |t-b|^{\beta}\ dt\lesssim \int_0^{\sigma_a(b)}
(t-b)^{\alpha+\beta}\ dt\lesssim (b-a)^{\alpha+\beta+1}+b^{\alpha+\beta+1}. \eeq
If $\alpha+\beta+1<0$, resp. $\alpha+\beta+1>0$, then this is $\lesssim (b-a)^{\alpha+\beta+1}$,
resp. $\lesssim 1$. On $[\sigma_a(b),a]$, one has $c<\frac{|t-b|}{b-a}<C$ this time, so
\beq \int_{\sigma_a(b)}^a |t-a|^{\alpha} |t-b|^{\beta}\ dt\lesssim (b-a)^{\alpha+\beta+1}.\eeq
Finally, $\int_a^b |t-a|^{\alpha} |t-b|^{\beta}\ dt=\frac{{\bf \Gamma}(\alpha+1){\bf\Gamma}(\beta+1)}{{\bf\Gamma}(\alpha+\beta+2)} (b-a)^{\alpha+\beta+1}$, where $\bf \Gamma$ is the Gamma function.

\end{proof}



\section{Mean square error computations} \label{sec:mean-square}
 This section is devoted to prove Theorem \ref{thm:1.1} and Theorem \ref{milstein}. 
We will start with the error of the Euler scheme, the Milstein-type scheme will be considered later on.
 Note that we can decompose the error of the Euler scheme as 
$X_T - X_T^n=\sum_{i=1}^n  J_{i}^{n}$, where the random variables $J_{i}^{n}$ are defined by
$$ 
J_{i}^{n}= \int_{iT/n}^{(i+1)T/n} (B_s^{(1)} - B_{i/n}^{(1)}) \, d B_s^{(2)}
=\ca_{(iT)/n,(i+1)T/n}, \qquad i=0, \ldots, n-1,
$$
where $\ca_{st}$ is obtained as the $L^2$-limit of  $\ca_{st}(\eta)$ according to Proposition \ref{prop:2.3}. In particular, $\be[|X_T - X_T^n|^2]=\sum_{i,j}\be[J_i^n J_j^n]$, which means that we are first reduced to study the quantities $\be[J_i^n J_j^n]$ in terms of $i,j$ and $n$. Towards this aim, one can first remark that, since  fBm is self-similar and has stationary increments, we have
\beq\label{eq:15}
\EX  [J_{i}^{n}  J_j^{n} ]= T^{4H} n^{-4H} \EX  [ I_{i} I_j],
\eeq
with
$$ I_{i}= \int_{i}^{i+1} (B_s^{(1)} - B_{i}^{(1)}) \, d B_s^{(2)}
=\ca_{i,i+1}, \qquad i=0, \ldots, n-1, $$
We now show how to handle those terms.

\subsection{Some moment estimates}
The preliminary results we need in order to prove Theorem \ref{thm:1.1} are summarized in the following lemma:
\begin{lem}\label{lem:3.1}
Let  $\ca_{01}=\int_{0}^{1} B_s^{(1)} \, d B_s^{(2)}$ and $\ca_{12}=\int_{1}^{2} (B_s^{(1)} - B_1^{(1)}) \, d B_s^{(2)}$ be the double iterated integrals with respect to $B$ obtained by applying Proposition \ref{prop:2.3}. Define 
$$
c_1= \EX  \lc\left|  \ca_{01} \right|^{2}\rc,
\quad\mbox{and}\quad
c_2=\EX  \lc \ca_{01} \ca_{12} \rc.
$$
Then we have 
\beq\label{c1}
c_1= \frac{H}{2}\lp \beta(2H, 2H) + \frac{1}{4H-1} \rp,
\eeq
and
\begin{multline}\label{c2}
c_2 =    \frac{1}{4} (1 - 2^{2H})  + \frac{2H-1}{4(4H-1)}   + \frac{H 2^{4H} }{4(4H-1)}  \\
+ \frac{ H}{2}   \int_{0}^{1}  (  y^{2H}|  1+y|^{2H-1}   - y^{2H-1}|y+1|^{2H} ) \, d y.
\end{multline}
 \end{lem}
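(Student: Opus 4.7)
The plan is to apply Lemma \ref{lemma:1:Area2N} with $N=1$. Since there is only one pairing of two indices, both for the $i$- and $j$-sums, the formula reduces to
\begin{equation*}
\EX[\ca_{s_1,t_1}(\eta)\ca_{s_2,t_2}(\eta)] \,=\, \int_{s_1}^{t_1}\!\!\int_{s_2}^{t_2} K'(\eta;x_1,x_2)\,K_{s_1,s_2}(\eta;x_1,x_2)\,dx_1\,dx_2.
\end{equation*}
Taking $(s_1,t_1)=(s_2,t_2)=(0,1)$ yields $c_1$ and $(s_1,t_1,s_2,t_2)=(0,1,1,2)$ yields $c_2$, after passing to the limit $\eta\downarrow 0$. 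The passage to the limit is justified by the $L^2$-convergence of $\ca(\eta)$ to $\ca$ given by Proposition \ref{prop:2.3}; for $H>1/2$ this is immediate by dominated convergence, while for $H\in(1/4,1/2)$ one keeps $\eta>0$ until the end and uses the fact that the final closed-form identities are meromorphic in $H$ and thus extend by analytic continuation. In the limit, the kernels become $K'(0;x_1,x_2)=H(2H-1)|x_1-x_2|^{2H-2}$ and $K_{s_1,s_2}(0;x_1,x_2)=\EX[(B_{x_1}-B_{s_1})(B_{x_2}-B_{s_2})]$.

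\textit{Computation of $c_1$.} Using $\EX[B_xB_u]=\frac12(x^{2H}+u^{2H}-|x-u|^{2H})$, one is reduced to
\begin{equation*}
c_1=\frac{H(2H-1)}{2}\iou\iou (x^{2H}+u^{2H}-|x-u|^{2H})\,|x-u|^{2H-2}\,dx\,du.
\end{equation*}
By the symmetry $x\leftrightarrow u$, this splits into two integrals. First, $\iou\iou |x-u|^{4H-2}\,dx\,du=\frac{1}{2H(4H-1)}$ is elementary. Second, integrating $u$ first in $\iou\iou x^{2H}|x-u|^{2H-2}\,dx\,du$ gives $\frac{x^{2H-1}+(1-x)^{2H-1}}{2H-1}$, and the subsequent $x$-integration produces $\frac{1}{2H-1}\bigl(\frac{1}{4H}+\beta(2H+1,2H)\bigr)$. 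Invoking $\beta(2H+1,2H)=\frac12\beta(2H,2H)$ and the elementary simplification $\frac14-\frac{2H-1}{4(4H-1)}=\frac{H}{2(4H-1)}$, the two contributions combine to give (\ref{c1}).

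\textit{Computation of $c_2$.} With $\EX[B_x(B_u-B_1)]=\frac12(u^{2H}-(u-x)^{2H}-1+(1-x)^{2H})$ for $x\in[0,1]$, $u\in[1,2]$, we have
\begin{equation*}
c_2=\frac{H(2H-1)}{2}\iou\int_1^2(u-x)^{2H-2}\bigl[u^{2H}-(u-x)^{2H}-1+(1-x)^{2H}\bigr]\,du\,dx,
\end{equation*}
which splits into four integrals. The two simplest ones evaluate in closed form to $\iou\int_1^2(u-x)^{4H-2}\,du\,dx=\frac{2^{4H}-2}{4H(4H-1)}$ and $\iou\int_1^2(u-x)^{2H-2}\,du\,dx=\frac{2^{2H}-2}{2H(2H-1)}$. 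The two remaining ones reduce, via the changes of variable $y=u-1$ and $y=1-x$ respectively, to $\frac{1}{2H-1}\bigl(\frac{2^{4H}-1}{4H}-\iou y^{2H-1}(1+y)^{2H}\,dy\bigr)$ and $\frac{1}{2H-1}\bigl(\iou y^{2H}(1+y)^{2H-1}\,dy-\frac{1}{4H}\bigr)$; these are exactly the two pieces of the residual integral appearing in (\ref{c2}). Summing with the appropriate signs yields the claim.

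\textit{Main obstacle.} The bulk of the work is algebraic rather than analytical: one must combine $\frac{2^{4H}-2}{8}$, $\frac{(2H-1)(2^{4H}-2)}{8(4H-1)}$ and $\frac{2^{2H}-2}{4}$ and observe the identity $\frac{2^{4H}-2}{8}-\frac{(2H-1)(2^{4H}-2)}{8(4H-1)}=\frac{H\cdot 2^{4H}}{4(4H-1)}-\frac{H}{2(4H-1)}$, which combined with $\frac12-\frac{2^{2H}}{4}$ precisely reproduces the three closed-form constants $\frac14(1-2^{2H})$, $\frac{2H-1}{4(4H-1)}$ and $\frac{H\cdot 2^{4H}}{4(4H-1)}$ in the statement. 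The secondary analytic point, namely the passage $\eta\downarrow 0$ for $H<1/2$ where the limiting kernel is not locally integrable on the diagonal, is best handled by keeping the $\eta$-regularization until the final closed forms are obtained and then invoking analyticity in $H$.
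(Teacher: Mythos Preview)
Your proposal is correct and follows essentially the same route as the paper: apply Lemma~\ref{lemma:1:Area2N} with $N=1$, pass to the limit $\eta\to 0$, split the resulting double integral into four pieces, and evaluate each by elementary substitutions. The paper simply cites references for $c_1$ and carries out only the $c_2$ computation (with the same four-term decomposition and the same substitutions $y=u-1$, $y=1-x$), whereas you also work out $c_1$ explicitly; for $c_2$ the paper additionally remarks that the limit $\eta\to 0$ is harmless because the integration domain $[0,1]\times[1,2]$ avoids the diagonal, which makes your analytic-continuation device unnecessary there.
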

 
 \begin{proof}
 Both identities are obtained thanks to the same kind of considerations. Furthermore, relation (\ref{c1}) is obtained in \cite[Theorem 34]{BC} or \cite{Unt08a}. We thus focus on identity (\ref{c2}).

\smallskip

Recall that $c_2$ can be obtained as a limit of $c_2(\eta)$ when $\eta\to 0$, where $c_2(\eta)$ is given by:
\bean
c_2(\eta)&:=& 
\EX  \lc \int_{1}^{2} \lp B_s^{(1)}(\eta) - B_1^{(1)}(\eta) \rp\, d B_s^{(2)}(\eta) \int_{0}^{1} B_s^{(1)}(\eta) \, d B_s^{(2)}(\eta)\rc  \\
&=& \be\lc  \ca_{01}(\eta) \, \ca_{12}(\eta) \rc.
\eean
We can thus apply identity (\ref{eq:11}) with $N=1$, $s_1=0,t_1=1,s_2=1,t_2=2$, use expression (\ref{eq:10}) for the kernel $K$, and let $\eta\to 0$ in order to obtain:
\begin{align*}
 c_2 &=  \frac{1}{2} \gamma_H \int_{1}^{2}  \int_{0}^{1}  |s_1 - s_2|^{2H-2} \left(  s_1^{2H} - 1 -  |s_1 - s_2|^{2H} +  |s_2 - 1|^{2H}  \right)  \, d s_2 \, ds_1 \\ & := c_{2,1}  + c_{2,2}   + c_{2,3} + c_{2,4},    
 \end{align*} 
 with $ \gamma_H := H(2H-1)$. It should be noticed here that, since we are integrating on the rectangle $[0,1]\times[1,2]$, the limits as $\eta\to 0$ can be taken without much care about singularities of our kernels $[x]_{\eta}^{\beta}$ for negative $\beta$'s. Moreover, direct calculations yield
 \begin{multline*}
 c_{2,1} =  \frac{1}{2} \gamma_H \int_{1}^{2}  \int_{0}^{1}  s_1^{2H}|s_1 - s_2|^{2H-2}   \, d s_2 \, ds_1  =   \frac{H}{2}  \int_{1}^{2}    s_1^{2H}( s_1 ^{2H-1} -   |s_1 - 1|^{2H-1} ) \, d s_1\\
  =   \frac{1}{8}  (2^{4H} - 1) -  \frac{H}{2}  \int_{1}^{2}    x^{2H}  |x - 1|^{2H-1}  \, d x
 \end{multline*}
 and
 $$
 c_{2,2} =  -\frac{1}{2} \gamma_H \int_{1}^{2}  \int_{0}^{1}  |s_1 - s_2|^{2H-2}   \, d s_2 \, ds_1  =  
 -\frac{1}{2}  \EX \lc( B_2^{(1)} - B_1^{(1)}) B_1^{(1)}\rc =  -\frac{1}{4} (2^{2H} - 2).  
 $$
  Finally, we have
  \begin{multline*}
 c_{2,3} =  -\frac{1}{2} \gamma_H \int_{1}^{2}  \int_{0}^{1}  |s_1 - s_2|^{4H-2}   \, d s_2 \, ds_1 
 =   -\frac{\gamma_H}{2(4H-1)}  \int_{1}^{2}     s_1^{4H-1} -   |s_1 - 1|^{4H-1}  \, d s_1
 \\ 
 =   -\frac{2H-1}{8(4H-1)}   (2^{4H}-2) ,
 \end{multline*}
 and
 \begin{align*}
 c_{2,4} &=  \frac{1}{2} \gamma_H \int_{1}^{2}  \int_{0}^{1}  |s_2-1|^{2H}|s_1 - s_2|^{2H-2}   \, d s_2 \, ds_1 \\ & =   \frac{H}{2}  \int_{0}^{1}    |s_2 - 1|^{2H}(  |  2-s_2|^{2H-1} -   |  1-s_2|^{2H-1}) \, d s_2
 \\ & =  -\frac{1}{8} +   \frac{H}{2}  \int_{0}^{1}    |x- 1|^{2H}|  2-x|^{2H-1}  \, d x.
 \end{align*}
By the substitution $y= x-1 $ we obtain
\begin{align*}
  \int_{1}^{2}    x^{2H}|x-1|^{2H-1}  \, d x =  \int_{0}^{1}    y^{2H-1}|y+1|^{2H}  \, d y 
\end{align*}
and moreover, by setting $y= -x+1 $  we have
$$ \int_{0}^{1}    |x - 1|^{2H}|  2-x|^{2H-1}  \, d x = \int_{0}^{1}    y^{2H}|  1+y|^{2H-1}  \, d y,  $$
where these two integral expressions appear respectively in the expressions for $c_{2,1}$ and $c_{2,4}$. Hence,  putting together our elementary calculations for $c_{2,1},\ldots,c_{2,4}$, expression (\ref{c2}) follows easily.

\end{proof}

\subsection{Proof of Theorem \ref{thm:1.1}}
Recall that we have reduced our $L^2$-estimates to the evaluation of $\be[I_iI_j]$, where $I_i=\ca_{i,i+1}$. We are now ready to compute those terms, separating three different cases:

\smallskip

{\noindent {\bf (1) Diagonal terms.}}
By stationarity of the increments and thanks to Lemma \ref{lem:3.1}, we have
$$ \EX  \lc |I_i|^2\rc =  \EX \lc\left|  \ca_{01} \right|^{2}\rc =c_1.$$
So (\ref{c1}) in Lemma \ref{lem:3.1} and (\ref{eq:15}) give
\begin{align} \label{diag}
  \sum_{i=0}^{n-1} \EX \lc |J_i^n|^2 \rc  =  T^{4H} \cdot \frac{H}{2} \left( \beta(2H, 2H) + \frac{1}{4H-1} \right) \cdot n^{-4H+1}.  \end{align}

\smallskip

{\noindent \bf (2) Secondary diagonal terms.}
Using again the stationarity of the increments and Lemma \ref{lem:3.1}, we obtain
$$ 
\EX \lc I_i I_{i+1}\rc =  \EX   \lc  \ca_{01} \, \ca_{12}\rc.
$$
Hence by (\ref{c2}) in Lemma \ref{lem:3.1} it follows
\begin{align} \label{sec-diag}
 \sum_{i,j=0, |i-j|=1}^{n-1} \EX\lc J_i^n  J_{i+1}^n \rc =   2  T^{4H} \cdot c_2 \cdot  (n-1) n^{-4H}.  
 \end{align}

\bigskip

{\noindent \bf (3)  Off-diagonal terms.}  
Let us consider now the off-diagonal terms, which will induce most of the differences in the $L^2$-limit according to the value of the Hurst parameter $H$. Observe first that, as in the proof of Lemma \ref{lemma:1:Area2N}, for $|i -j|>1$ it holds:
$$
\EX \lc  \ca_{i,i+1}(\eta)\ca_{j,j+1}(\eta) \rc
= \int_{i}^{i+1} \int_{j}^{j+1}  \int_{i}^{s_1} \int_{j}^{s_2}
K'(\eta;s_1,s_2) \, K'(\eta;u_1,u_2) 
\,d u_1 \, du_2 \, ds_1 \, ds_2.
$$
Since we are now away from the diagonal, one can take safely the limit $\eta\to 0$ in the expression above, which gives:
\begin{align}\label{eq:19}
&\EX \lc  \ca_{i,i+1} \, \ca_{j,j+1} \rc \\ 
 &= H^2(2H-1)^2 \int_{i}^{i+1} \int_{j}^{j+1}  \int_{i}^{s_1} \int_{j}^{s_2} |u_1 - u_2|^{2H-2}  |s_1-s_2|^{2H-2} \,d u_1 \, du_2 \, ds_1 \, ds_2. \nonumber
\end{align}

\medskip
 
Now we have to distinguish between four cases:

\medskip

{\it \noindent (i) The case} $H <1/2$.
Equations (\ref{eq:15}) and (\ref{eq:19}) yield directly
$$ |\EX \lc J_i^n J_j^n\rc| \leq  T^{4H} \left( H(2H-1) \int_{i}^{i+1} \int_{j}^{j+1}  |s_1-s_2|^{2H-2}  \, ds_1 \, ds_2 \right)^{2} \cdot n^{-4H} . $$
Since $|i-j| >1$, the mean value theorem  gives
$$ |\EX \lc J_i^n J_j^n\rc| \leq  C n^{-4H}  |i-j-1|^{4H-4} ,$$
where $C$ is a constant depending only on $H$ and $T$.
Note that for $H<1/2$ we have
$$ \sum_{|i-j|>1} |i-j-1|^{4H-4}  < \infty $$
and so it follows 
\begin{align} \label{off_1}
   \sum_{|i-j|>1}  |\EX J_i^n J_j^n| = o(n^{-4H+1}). 
\end{align}
\medskip

{\it \noindent (ii) The case} $1/2<H<3/4$. Applying again relation (\ref{eq:15}) and the mean value theorem  to the integral on the right hand side of relation (\ref{eq:19}), we obtain
\begin{align*}
  \sum_{|i-j|>1} \EX J_i^n J_j^n &= \frac{T^{4H}}{4} H^{2}(2H-1)^{2} n^{-4H} \sum_{|i-j|>1} |i-j-\xi_{i,j}|^{2H-2} |i-j - \tilde{\xi}_{i,j}|^{2H-2}
\end{align*}
where $\xi_{i,j}, \tilde{\xi}_{i,j} \in (-1,1)$. Note now that 
$$
\sum_{|i-j|>1}  |i-j+1|^{4H-4} \leq
\sum_{|i-j|>1} |i-j-\xi_{i,j}|^{2H-2} |i-j - \tilde{\xi}_{i,j}|^{2H-2} 
\leq  \sum_{|i-j|>1}  |i-j-1|^{4H-4} .      
$$
Moreover, it is readily checked, thanks to a Taylor expansion together with the fact that
$  \sum_{|i-j|>1} |i-j-1| ^{4H-5} < \infty$, that 
$$ 
\sum_{|i-j|>1}  |i-j\pm 1|^{4H-4} =  \sum_{|i-j|>1}  |i-j|^{4H-4} + O(1).  
$$
Hence, we have
\begin{align*}
  \sum_{|i-j|>1} \EX \lc J_i^n J_j^n \rc&= \frac{T^{4H}}{4} H^{2}(2H-1)^{2} n^{-4H} \sum_{|i-j|>1} |i-j|^{4H-4} + O(n^{-4H}).
\end{align*}
Now, observe that
\begin{align*}
 \sum_{|i-j|>1} |i-j|^{4H-4} &= 2 \sum_{i=2}^{n-1} \sum_{j=0}^{i-2}  |i-j|^{4H-4} = 
 2 \sum_{i=2}^{n-1} \sum_{j=2}^{i}  j^{4H-4} 
 \\ & = 2 \sum_{j=2}^{n-1} \sum_{i=j+1}^{n-1}  j^{4H-4} = 2 \sum_{j=2}^{n-1} (n-1-j) j^{4H-4} 
 \\ & = 2n \sum_{j=2}^{n-1} j^{4H-4} -  2 \sum_{j=2}^{n-1}  j^{4H-4}    -2\sum_{j=2}^{n-1}  j^{4H-3} 
  \\ & = 2n \sum_{j=1}^{n-1} j^{4H-4} -  2 \sum_{j=1}^{n-1}  j^{4H-4}    -2\sum_{j=1}^{n}  j^{4H-3}  + O(1)
 .
\end{align*}
Let us treat those 3 terms separately: since $H>1/2$, we have $4H-3>-1$, and thus, by Riemann sums convergence, we get
$$ 
\lim_{n\to\infty}\sum_{j=1}^{n-1} \left |\frac{j}{n} \right|^{4H-3} \cdot n^{-1} = 
\int_{0}^{1} x^{4H-3} \,dx = \frac{1}{4H-2}.$$ 
It is thus easily seen that
$   n^{-4H}    \sum_{j=1}^{n-1} j^{4H-3} = O(n^{-2}). $
Moreover, since 
$$ \sum_{j=1}^{n-1} j^{4H-4} = \zeta(4-4H) +o(1),$$
where $\zeta$ stands for the usual Riemann zeta function, we have
$$ 2n^{-4H}  \sum_{j=1}^{n} nj^{4H-4}   -  2n^{-4H}  \sum_{j=1}^{n} j^{4H-4}  =  2\zeta(4H-4) \cdot n^{-4H+1} + o(n^{-4H+1}).  $$
So altogether we obtain
\begin{align} \label{off_2}
  \sum_{|i-j|>1} \EX \lc J_i^n J_j^n\rc &=   \frac{T^{4H}}{2} \zeta(4-4H) H^2(2H-1)^2\cdot n^{-4H+1}  + o(n^{-4H+1}) .
\end{align}

\medskip

{\it \noindent (iii) The case} $H=3/4$.
Proceeding as in the previous case we obtain
\begin{align*} 
 \sum_{|i-j|>1} \EX\lc J_i^n J_j^n \rc&=    \frac{T^{4H}}{4} H^{2}(2H-1)^{2} n^{-3} \sum_{|i-j|>1} |i-j- \xi_{i,j}|^{-1/2}|i-j- \tilde{\xi}_{i,j}|^{-1/2}    \\ & =  \frac{T^{4H}}{4} H^{2}(2H-1)^{2} n^{-3} \sum_{|i-j|>1} |i-j|^{-1} + O(n^{-2}).    
\end{align*}
Moreover, following again the computation of our Case (ii) above, we obtain
 \begin{align}\label{eq:23}
 \sum_{|i-j|>1} |i-j|^{-1} & = 2 \sum_{j=1}^{n-1} (n-1-j) j^{-1} + O(1) 
 = 2 \sum_{j=1}^{n-1} (n-1) j^{-1} - 2(n-1)+O(1).
\end{align}
Clearly, $2n^{-3}(n-1)=O(n^{-2})$. Moreover, since $ \sum_{j=1}^{n-1} j^{-1} = c+ \log(n)+o(1)$, where $c$ stands for the Euler-Mascheroni constant, we get
$$ 2n^{-3} \sum_{j=1}^{n-1}( n -1) j^{-1}  = 2n^{-2} \log(n) + O(n^{-2}). $$
Hence, plugging these two relations into equation (\ref{eq:23}), it follows
\begin{align} \label{off_3}
  \sum_{|i-j|>1} |\EX J_i^n J_j^n| &=  \frac{T^{4H}}{2} H^{2}(2H-1)^{2}  \log(n) n^{-2}  + O(n^{-2}) .
\end{align}

\medskip

{\it \noindent (iv) The case} $H>3/4$.
Along the same lines as in the previous cases, we end up with:
\begin{align*} 
 \sum_{|i-j|>1} \EX J_i^n J_j^n &= \frac{T^{4H}}{4} H^{2}(2H-1)^{2} n^{-4H} \sum_{|i-j|>1} |i-j- \xi_{i,j}|^{2H-2}|i-j- \tilde{\xi}_{i,j}|^{2H-2}    \\ & =  \frac{T^{4H}}{4} H^{2}(2H-1)^{2} n^{-4H} \sum_{|i-j|>1} |i-j|^{4H-4} + o(n^{-2}).    
\end{align*}
Since $4H-4>-1$, we now obtain:
$$ 
\sum_{|i-j|>1} \left| \frac{i-j}{n} \right|^{4H-4} \cdot n^{-2} \longrightarrow    \int_0^1  \int_0^1  |x-y|^{4H-4} \, dx \, d y= \frac{2}{(4H-3)(4H-2)},
$$
which yields, for any $H>3/4$:  
\begin{align} \label{off_4} 
 \sum_{|i-j|>1} \EX \lc J_i^n J_j^n\rc  
 = \frac{T^{4H}}{4} \frac{H^{2}(2H-1)}{4H-3} \cdot n^{-2} + o(n^{-2}) 
\end{align}

\medskip 

Theorem \ref{thm:1.1} now follows easily from combining (\ref{diag}), (\ref{sec-diag}), (\ref{off_1}), (\ref{off_2}), (\ref{off_3}) and (\ref{off_4}).

\subsection{Proof of Theorem \ref{milstein}}

Recall that the Milstein-type scheme is given by
$$ \widehat{X}_T^n= \frac{1}{2}  \sum_{i=0}^{n-1} \big( B_{iT/n}^{(1)} + B_{(i+1)T/n}^{(1)} \big)\big( B_{(i+1)T/n}^{(2)}  -   B_{iT/n}^{(2)} \big). $$
As in the proof of Lemma \ref{lem:3.1}, using  the scaling property,  the self-similarity of fBm, Lemma 2.6 and moreover letting $\eta \rightarrow 0$ and applying dominated convergence (note that we assume here $H>1/2$), the mean square error of the Milstein-type scheme 
is given by 
$$      n^{-4H}  T^{4H} \gamma_H   \sum_{i=0}^{n-1} \sum_{j=0}^{n-1} \int_{i}^{i+1} \int_{j}^{i+1}  \theta_{i,j}(s_{1},s_{2})   |s_1-s_2|^{2H-2}\, ds_{2} \, ds_{1}
 $$
with $\gamma_H=H(2H-1)$ and
\begin{align*}
 & \theta_{i,j}(s_{1},s_{2})   = \frac{1}{4} \EX ( 2B_{s_1}^{(1)} -B_{i}^{(1)} - B_{i+1}^{(1)})( 2B_{s_2}^{(1)} -B_{j}^{(1)}-B_{j+1}^{(1)} )
\end{align*}
for $s_{1} \in [i,i+1]$, $s_{2} \in [j,j+1]$, $i,j = 0, \ldots, n-1.$

\smallskip

\noindent
{\it (i)} We first show that the contribution of the off-diagonal terms  to the error is 
 asymptotically negligible, i.e., 
\begin{align}\label{off-diag-m}
& n^{-4H} \left| \sum_{|i-j| > \log(n)}\int_{i}^{i+1} \int_{j}^{j+1} \theta_{i,j}(s_{1},s_{2})  |s_1-s_2|^{2H-2} \, ds_{2} \, ds_{1}  \right| \\ &= 2n^{-4H}  \left| \sum_{i-j > \log(n)}\int_{i}^{i+1} \int_{j}^{j+1} \theta_{i,j}(s_{1},s_{2})  |s_1-s_2|^{2H-2} \, ds_{2} \, ds_{1} \right| \nonumber
= o(n^{-4H+1}).\end{align}
In \cite{AN} (see Appendix A) it is shown that
\begin{align} \label{off-diag-stein}
 \left|  \sum_{i-j > \log(n)}\int_{i}^{i+1} \int_{j}^{j+1} \theta_{i,j}(s_{1},s_{2})   \, ds_{2} \, ds_{1}  \right| \leq C \cdot (\log(n))^{4H-2} \cdot n^{2H-2}.\end{align}

 To use this estimate,  define now
\begin{align*}
R_{i,j}^1= \{ s_1,s_2 \in [i,i+1] \times [j,j+1]: \, \theta_{i,j}(s_1,s_2) \geq 0 \} ,\\
R_{i,j}^2=  \{ s_1,s_2 \in [i,i+1] \times [j,j+1]: \, \theta_{i,j}(s_1,s_2) < 0 \}.
\end{align*} 
An application of the mean value theorem gives
\begin{align*}
&    |i-j+1|^{2H-2} \int \int_{R_{i,j}^1} \theta_{i,j}(s_{1},s_{2})   \, ds_{2} \, ds_{1} +   |i-j-1|^{2H-2} \int \int_{R_{i,j}^2} \theta_{i,j}(s_{1},s_{2})   \, ds_{2} \, ds_{1} \\
 &   \, \, \leq \int_{i}^{i+1} \int_{j}^{i+1} \theta_{i,j}(s_{1},s_{2})  |s_1-s_2|^{2H-2} \, ds_{2} \, ds_{1}\\
 & \quad  \leq   |i-j-1|^{2H-2} \int \int_{R_{i,j}^1} \theta_{i,j}(s_{1},s_{2})   \, ds_{2} \, ds_{1}  +   |i-j+1|^{2H-2} \int \int_{R_{i,j}^2} \theta_{i,j}(s_{1},s_{2})   \, ds_{2} \, ds_{1}.
\end{align*} 
Note that
$$  \left| \int \int_{R_{i,j}^1} \theta_{i,j}(s_{1},s_{2})   \, ds_{2} \, ds_{1} \right| + \left| \int \int_{R_{i,j}^2} \theta_{i,j}(s_{1},s_{2})   \, ds_{2} \, ds_{1} \right|  \leq 2 , $$
so it follows
\begin{align*}
 & n^{-4H} \int_{i}^{i+1} \int_{j}^{i+1} \theta_{i,j}(s_{1},s_{2})  |s_1-s_2|^{2H-2}\, ds_{2} \, ds_{1}\\
 &  \qquad = n^{-4H} |i-j|^{2H-2} \int_{i}^{i+1} \int_{j}^{i+1} \theta_{i,j}(s_{1},s_{2})   \, ds_{2} \, ds_{1} + n^{-4H} \rho_{i,j}
\end{align*} 
with
$$  |\rho_{i,j} | \leq C \cdot |i-j-1|^{2H-3}.$$
Using (\ref{off-diag-stein}) we thus have
\begin{align*} 
&  n^{-4H}  \left|\sum_{|i-j| > \log(n)}\int_{i}^{i+1} \int_{j}^{j+1} \theta_{i,j}(s_{1},s_{2})  |s_1-s_2|^{2H-2}    \, ds_{2} \, ds_{1} \right| \\ & \qquad \qquad \quad  \quad  \leq C  \cdot (\log(n)^{6H-4}n^{-2H-2}) + \sum_{|i-j| > \log(n)} n^{-4H} |\rho_{i,j}|.
 \end{align*}
Since
\begin{align*} 
  \sum_{|i-j| > \log(n)}    |i-j|^{2H-3} \leq  n \sum_{i > \log(n)}^{\infty}   i^{2H-3} = O( n \log(n)^{2H-2}),
  \end{align*}
  we have obtained
 \begin{align*}
n^{-4H}  \left| \sum_{|i-j| > \log(n)}\int_{i}^{i+1} \int_{j}^{j+1} \theta_{i,j}(s_{1},s_{2})  |s_1-s_2|^{2H-2} \, ds_{2} \, ds_{1} \right|= o(n^{-4H+1}),\end{align*} that is (\ref{off-diag-m}).

\smallskip

\noindent
{\it (ii)} Now consider the "close to  diagonal" terms. Since 
\begin{align*}\theta_{i,j}(s_1,s_2)&= \frac{1}{4} \gamma_H \int_i^{i+1} \int_j^{j+1} \big (1_{[i,s_1]}(u_1)-1_{[s_1,i+1]}(u_1) \big) \\ & \qquad \qquad \qquad \qquad \qquad \times \big(1_{[i,s_2]}(u_2)-1_{[s_2,i+1]}(u_2) \big) |u_1-u_s|^{2H-2} \, du_2 du_1, \end{align*}
we have  for $|i-j|>1$ that
 $$ |\theta_{i,j}(s_1,s_2)| \leq C \cdot |i-j-1|^{2H-2}.$$
 Thus it follows
 \begin{align*} \left|  \sum_{1<|i-j| < \log(n)}\int_{i}^{i+1} \int_{j}^{j+1} \theta_{i,j}(s_{1},s_{2})  |s_1-s_2|^{2H-2} \, ds_{2} \, ds_{1} \right| &\leq C \sum_{1<|i-j| < \log(n)} |i-j-1|^{4H-4} \\ & \leq C  \log(n) \sum_{i=1}^{n}  i^{4H-4}.
 \end{align*}
 If $H<3/4$ then
 $$  \sum_{i=1}^{n}  i^{4H-4} < \infty. $$
 Moreover, if $H=3/4$ then
 $$  \sum_{i=1}^{n}  i^{4H-4} = c+\log(n) + o(1),$$
 where $c$ is again the Euler-Mascheroni constant.
 Finally, if $H>3/4$ we have
$$ \sum_{i=1}^{n}  (i/n)^{4H-4} \cdot n^{-1} \longrightarrow \int_0^1 x^{4H-4} dx = \frac{1}{4H-3} $$
and so
$$ \sum_{i=1}^{n}  i^{4H-4} = O(n^{4H-3}). $$
Hence we obtain
 \begin{align*} n^{-4H}  \left| \sum_{1<|i-j| < \log(n)}\int_{i}^{i+1} \int_{j}^{j+1} \theta_{i,j}(s_{1},s_{2})  |s_1-s_2|^{2H-2} \, ds_{2} \, ds_{1} \right| & =o(n^{-4H+1}).
 \end{align*}

\smallskip

\noindent
{\it (iii)} Combining step (i) and (ii) we have
$$      n^{-4H} \left| \sum_{1<|i-j| < n}\int_{i}^{i+1} \int_{j}^{j+1} \theta_{i,j}(s_{1},s_{2})  |s_1-s_2|^{2H-2} \, ds_{2} \, ds_{1} \right|=o(n^{-4H+1}).   $$
Therefore, it follows that
the leading error term of the Milstein-type scheme is given by
\begin{align*}
    & \gamma_H T^{4H} \sum_{0 \leq |i-j| \leq 1 } \int_{i}^{i+1} \int_{j}^{j+1} \theta_{i,j}(s_{1},s_{2})  |s_1-s_2|^{2H-2} \, ds_{2} \, ds_{1} \\ & \quad
= T^{4H}\sum_{i=0}^{n-1}  \EX \left| \int_{i}^{i+1} \Big( B_s^{(1)} - \frac{1}{2}(B_{i}^{(1)} + B_{i+1}^{(1)}) \Big) d B_s^{(2)}\right|^2  \\ & \qquad + 
T^{4H} \sum_{|i-j|=1}  \EX  \int_{i}^{i+1} \Big( B_{s_1}^{(1)} - \frac{1}{2}(B_{i}^{(1)} + B_{i+1}^{(1)}) \Big) d B_{s_1}^{(2)}  \int_{j}^{j+1} \Big( B_{s_2}^{(1)} - \frac{1}{2}(B_{j}^{(1)} + B_{j+1}^{(1)}) \Big) d B_{s_2}^{(2)}.
\end{align*}
Using again the scaling and self-similarity property of fBm we obtain
$$   \lim_{n \rightarrow \infty} n^{4H-1}  \sum_{0 \leq |i-j| \leq 1 } \int_{i}^{i+1} \int_{j}^{j+1} \theta_{i,j}(s_{1},s_{2})  |s_1-s_2|^{2H-2} \, ds_{2} \, ds_{1} = \alpha_4(H)$$
with 
$$
\alpha_4(H)= \frac{1}{4}  \EX  \int_1^2  (2B_{s_1} ^{(1)} -B_{1}^{(1)}-B_{2}^{(1)}) \, d B_{s_1} ^{(2)}  \int_0^3 (B_{s_2} ^{(1)}-B_{1}^{(1)}-B_{2}^{(1)})  \, d B_{s_2} ^{(2)},$$ 
which is the assertion of Theorem {\ref{milstein}}.

\bigskip
\bigskip


\section{Asymptotic error distribution of the Euler scheme: $H<3/4$}\label{sec:fourth-moments}

 Let us first explain the strategy we have adopted in order to obtain our central limit theorem for the difference $X_T-X_T^n$ of the Euler scheme and its approximation  in the case $H<3/4$. First,  recall that the random variable $X_T-X_T^n$ can be expressed as 
$$
X_T - X_T^n=\sum_{i=1}^n  J_{i}^{n},
\quad\mbox{with}\quad
J_{i}^{n}\triangleq \int_{iT/n}^{(i+1)T/n} (B_s^{(1)} - B_{i/n}^{(1)}) \, d B_s^{(2)}.
$$
With this expression in hand, it can be seen in particular that $X_T - X_T^n$ is still an element of the second chaos of our underlying fBm $B$.

\smallskip

Let us then recall the following  limit theorem for random variables in a fixed finite Gaussian  chaos,  which can be found in \cite[Theorem 1]{NP}:
\begin{prop}
Fix $p\ge 1$.
Let $\{Z_n;\, n\ge 1\}$ be a sequence of centered random variables belonging the $p\textsuperscript{th}$  chaos of a Gaussian process, and assume that 
\begin{equation}\label{eq:2:cond1}
\lim_{n\to\infty}\be[Z_n^2]=1.
\end{equation}
Then $Z_n$ converges in distribution to a centered Gaussian random variable if and only if  the following condition is met:
\begin{equation} \label{eq:2:cond2}
\lim_{n\to\infty}\be[Z_n^4]=3.
\end{equation}
\end{prop}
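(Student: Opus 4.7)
The plan is to represent each $Z_n$ as a multiple Wiener--It\^o integral $Z_n = I_p(f_n)$ for a symmetric kernel $f_n$ in the $p$-fold symmetric tensor product $\mathcal{H}^{\odot p}$ of the underlying Gaussian Hilbert space $\mathcal{H}$, and to exploit the product formula for multiple integrals. Under this representation, the normalization condition (\ref{eq:2:cond1}) reads $p!\,\|f_n\|_{\mathcal{H}^{\otimes p}}^2 \to 1$, so everything should be reducible to asymptotic statements about the kernels $f_n$ and their contractions $f_n \otimes_r f_n \in \mathcal{H}^{\otimes 2(p-r)}$ for $r=1,\ldots,p-1$.

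One implication is essentially soft: if $Z_n$ converges in distribution to $Z \sim \mathcal{N}(0,1)$, Nelson's hypercontractivity inequality for the Ornstein--Uhlenbeck semigroup ensures that inside a fixed chaos of order $p$ all $L^q$ norms are equivalent to the $L^2$ norm. Consequently $\sup_n \be[|Z_n|^{4+\delta}] < \infty$ for some $\delta>0$, so $Z_n^4$ is uniformly integrable and (\ref{eq:2:cond2}) follows from $\be[Z^4]=3$. For the converse, I would apply the product formula $I_p(f)^2 = \sum_{r=0}^{p} r!\binom{p}{r}^2 I_{2p-2r}(f\,\widetilde{\otimes}_r\,f)$ together with the chaos isometry to obtain the clean identity
\begin{equation*}
\be[Z_n^4] - 3\bigl(\be[Z_n^2]\bigr)^2 = \sum_{r=1}^{p-1} c_{p,r}\,\|f_n \,\widetilde{\otimes}_r\, f_n\|_{\mathcal{H}^{\otimes 2(p-r)}}^2
\end{equation*}
with strictly positive combinatorial constants $c_{p,r}$. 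Hence (\ref{eq:2:cond2}) forces every contraction norm $\|f_n \,\widetilde{\otimes}_r\, f_n\|$ to vanish asymptotically.

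From there I would invoke the Stein--Malliavin bound (in the Nourdin--Peccati style): for $Z_n$ in the $p$-th chaos,
\begin{equation*}
d_{\text{TV}}\bigl(Z_n,\mathcal{N}(0,1)\bigr) \,\le\, C\, \sqrt{\mathrm{Var}\bigl(p^{-1}\|D Z_n\|_{\mathcal{H}}^2\bigr)},
\end{equation*}
where $D$ is the Malliavin derivative. A direct Malliavin calculus computation reveals that $\mathrm{Var}\bigl(p^{-1}\|D Z_n\|_{\mathcal{H}}^2\bigr)$ is itself a positive linear combination of precisely the contraction norms $\|f_n \,\widetilde{\otimes}_r\, f_n\|^2$, which we have just shown to vanish; total variation (hence distributional) convergence to $\mathcal{N}(0,1)$ follows. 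The main obstacle is the bookkeeping in the product formula: extracting the displayed identity as a \emph{non-negative} combination of contraction norms requires careful symmetrization, and the same accounting, together with the commutation relations between $D$ and $I_p$, must be redone to express $\mathrm{Var}(p^{-1}\|DZ_n\|^2)$ in terms of the very same contractions. Once this algebraic step is in place, the analytic passage to convergence in law is essentially automatic.
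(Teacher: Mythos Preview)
The paper does not actually prove this proposition: it is quoted verbatim from \cite[Theorem~1]{NP} (Nualart--Peccati) and used as a black box, so there is no in-paper argument to compare against. Your proposal is a correct proof sketch, and in fact it follows the \emph{later} Nourdin--Peccati Stein--Malliavin route rather than the original 2005 argument cited by the paper. The original proof in \cite{NP} does not use the total-variation bound via $\mathrm{Var}(p^{-1}\|DZ_n\|_{\mathcal H}^2)$; it establishes the equivalence of (\ref{eq:2:cond2}) with the vanishing of the contraction norms and then deduces the CLT by a stochastic-calculus argument (random time change / Dambis--Dubins--Schwarz). Your approach is slicker and yields a quantitative bound as a bonus; the original one stays within classical It\^o calculus and avoids introducing the Stein machinery.

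One technical caveat: the displayed identity
\[
\be[Z_n^4]-3\bigl(\be[Z_n^2]\bigr)^2=\sum_{r=1}^{p-1} c_{p,r}\,\|f_n\,\widetilde{\otimes}_r\,f_n\|^2
\]
is not literally correct as written. Applying the product formula and the isometry yields a combination of the \emph{symmetrized} norms $\|f_n\,\widetilde{\otimes}_r\,f_n\|^2$, but the $r=0$ term contributes $(2p)!\,\|f_n\,\widetilde{\otimes}_0\,f_n\|^2$, and $f_n\,\widetilde{\otimes}_0\,f_n$ is the full symmetrization of $f_n\otimes f_n$, whose norm is \emph{not} simply $\|f_n\|^4$. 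Unwinding that symmetrization produces extra cross-terms which must be absorbed; the clean statement is that the left-hand side equals a nonnegative linear combination of the \emph{unsymmetrized} contraction norms $\|f_n\otimes_r f_n\|^2$, $r=1,\ldots,p-1$ (see e.g.\ Nualart--Peccati, or Nourdin--Peccati, Lemma~5.2.4). You flag this bookkeeping as the main obstacle, which is fair, but the version you wrote down would need correcting before the argument goes through.
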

This is the criterion we shall adopt in order to get our central limit theorem.  The second order  condition (\ref{eq:2:cond1}) is simply a normalization step,  so that the essential point is to analyze the fourth order moments of $X_T - X_T^n$ in order to prove condition (\ref{eq:2:cond2}). It should be stressed at this point that \cite[Theorem 1]{NP} contains in fact a series of equivalent statements for condition (\ref{eq:2:cond2}), based either on assumptions on the Malliavin derivatives of the random variables $Z_n$, or on purely deterministic criterions concerning the kernels defining the multiple integrals under consideration. These alternative criterions yield arguably some shorter computations, but we preferred to stick to the fourth order moment for two main reasons: \textit{(i)} The computations we perform in this context are more intuitive, and in a sense, easier to follow. \textit{(ii)} As we shall explain below, the fourth order computations lead to some visual representations in terms of graphs, and we will able to show easily that the CLT is equivalent to have \emph{the sum of the connected diagrams tending to 0}. As we shall see, this latter criterion is really analogous to \cite[Theorem 1, Condition (ii)]{NP}.

\smallskip

 In the remainder of this section, we check condition (\ref{eq:2:cond2}) for $X_T-X_T^n$, rescaled according to Theorem \ref{thm:1.1}, in order to get a central limit theorem  for our approximation. We shall first explain the basics of our diagrammatical method of computation and show how to reduce our problem to the analysis of connected diagrams. Then we split our study into regular and singular terms.

\subsection{Reduction of the problem}
Owing to Theorem \ref{thm:1.1}, it is enough for our purposes to show that $\lim_{n\to\infty}\be[Z_n^4]=3$, where 
\begin{equation}\label{eq:def-Zn}
Z_n=n^{2H-1/2} T^{-2H}  \lc \al_{\ell}(H)\rc^{-1/2}\sum_{i=1}^n J_i^n,
\end{equation} 
and where the index $\ell$ varies in $\{1,2\}$ according to the value of $H$. Furthermore, the self-similarity of fBm implies that 
$$
\be [Z_n^4]=  (\al_\ell(H) n)^{-2} \,  \be \left[ \left( \sum_{i=1}^n I_i \right)^4 \right],
$$ 
where $I_i={\mathcal A}_{i,i+1}$ is the L\'evy area between $i$ and $i+1$. Now, the most naive idea one can have in mind is to write $Z_n$ as $\lim_{n\to\infty}Z_n(\eta)$, where $Z_n$ is obtained by considering regularized areas based on $B(\eta)$, and then expand $\be [Z_n^4(\eta)]$ as
\begin{align}\label{eq:exp-Zn4}
&\be [Z_n^4(\eta)]=(\al_ \ell(H) n)^{-2} \, \sum_{i_1,\ldots,i_4=1}^n  
\EX  \lc \prod_{j=1}^{4} I_{i_j}(\eta) \rc 
\\
&=(\al_\ell(H) n)^{-2} \, \prod_{j=1}^{4}\lp \int_{i_j}^{i_j+1}dx_j\int_{i_j}^{x_j} dy_j \rp
\be\lc  \prod_{j=1}^{4} B_{x_j}^{'(1)}(\eta)\rc \, \be\lc  \prod_{j=1}^{4} B_{y_j}^{'(2)}(\eta)\rc, \notag
\end{align}
where we have used formula (\ref{eq:13}) with $N=2$ in order to get the last equality.

\smallskip

We apply now Wick's formula (\ref{eq:11}) in order to get an expression for the expected values above, and this is where our diagrammatical representation can be useful. Indeed, $\be[  \prod_{j=1}^{4} B_{x_j}^{'(1)}(\eta)] \, \be[  \prod_{j=1}^{4} B_{y_j}^{'(2)}(\eta)]$  is the sum of 9 different terms, connecting the $x_i$'s two by two according to formula (\ref{eq:11}), and also the $y_i$'s two by two.
 Each term may be represented by a four-point diagram in the following way. Draw a simple line, resp. a dashed line between $i$ and $j$ if $x_i$ and $x_j$, resp. $y_i$ and $y_j$  are connected. This procedure yields 9 different graphs, whose typical examples are given at Figure \ref{Fig2}.
 \begin{figure}[h]
   \centering
   \includegraphics[scale=0.6]{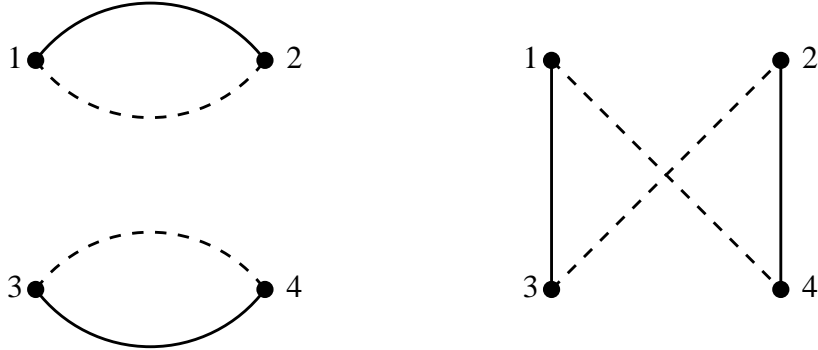}
    \caption{Two examples of diagrams.}
   \label{Fig2}
 \end{figure}
Moreover, the reader can then check easily that diagrams fall into two types: connected ones (6) and disconnected ones (3). Furthermore, up
to permutations of the indices, there is only one disconnected diagram, namely the first diagram of Figure \ref{Fig2}.  One checks immediately that the corresponding
integral  is $\be [I_{i_1}(\eta) I_{i_2}(\eta)] \be [I_{i_3}(\eta) I_{i_4}(\eta)]$ . Write also the
total contribution of the 6 {\it connected} diagrams as $\be [I_{i_1}(\eta) I_{i_2}(\eta)I_{i_3}(\eta) I_{i_4}(\eta)]_{(c)}$. Thanks to our graphical representation, it is then straightforward to prove  the following: for arbitrary 
constants $c_i$, $i=1,\ldots,n$, we have
\begin{equation} \label{eq:33}\be \lc\left( \sum_{i=1}^n c_i I_i(\eta)\right)^4\rc - 3\,   \EX^2 \lc \left( \sum_{i=1}^n c_i I_i(\eta)
\right)^2\rc=  \be \left[ \left( \sum_{i=1}^n c_i I_i(\eta)\right)^4 \right]_{(c)}. \end{equation}
Hence our condition (\ref{eq:2:cond2}) is satisfied for $Z_n$ defined by (\ref{eq:def-Zn}) if and only if the right-hand side of equation (\ref{eq:33}) goes to zero for $c_i=n^{-1/2}$ ($c_i$ is in fact independent of $i$). It should be stressed at that point that the latter condition (which is what we call \textit{connected diagrams go to 0}) is an analog of   criterion (ii) in \cite[Theorem 1]{NP}, but is obtained here without Malliavin calculus tools. This terminology
 is  inspired by the Feynman diagram analysis in the context of quantum field theory, see e.g. \cite{LeB}.

\smallskip

Let us set now $\tilde Z_n(\eta)=\sum_{i=1}^n I_i(\eta)$. With the above considerations in mind, we are reduced to show that 
\beq\label{eq:34}
\lim_{n\to\infty} \lim_{\eta\to 0}\frac{1}{n^2} 
\be \left[ \tilde Z_n^4(\eta)  \right]_{(c)} = 0.
\eeq
This relation will be first proved  for $H\in(1/2,3/4)$. In that case one may consider directly the situation where $\eta=0$, that is the infinitesimal covariance kernel $(x,y)\mapsto H(2H-1)
|x-y|^{2H-2}$, since
it is locally integrable. The proof requires only a few lines of computations. Each diagram in $\be [\tilde Z_n^4(\eta) ]_{(c)}$ splits into
{\em regular terms} -- which are also well-defined for $H<\half$ -- and {\em singular terms} -- which diverge when $H<\half$. As we 
shall see, the bounds given for the non-singular terms also hold true for $H<\half$. Then we shall see how to bound the singular terms
for arbitrary $H$ by replacing the ill-defined kernel $H(2H-1) |x-y|^{2H-2}$ with its regularization $K'(\eta;x,y)$. This step is of course only needed
in the case $H<\half$, but computations are equally valid in the whole range $H\in(1/4,3/4)$. In other words, the barrier $H=\half$ is
largely artificial (the proofs of the two cases are actually mixed, and one could also have written a general proof, at the price
of some more  technical calculations).

\smallskip

Before we enter into these computational details, let us reduce our problem a little bit more: recall again that we wish to prove relation (\ref{eq:34}) for $\tilde Z_n(\eta)=\sum_{i=1}^n I_i(\eta)$. As explained above, we evaluate $\be[ \tilde Z_n^4(\eta) ]_{(c)}$ with 6 different connected diagrams. Let us focus on the term, which will be called $\ct$, corresponding to the diagram given at Figure~\ref{Fig3} (the other ones can be treated in a similar manner).
 \begin{figure}[h]
   \centering
   \includegraphics[scale=0.6]{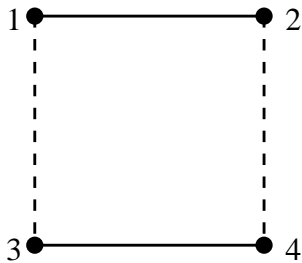}
    \caption{Typical connected diagram.}
   \label{Fig3}
 \end{figure}
Now, starting from expression (\ref{eq:exp-Zn4}), taking into account the fact that we are considering the particular diagram given at Figure \ref{Fig3} and integrating over the internal variables $y$, we end up with
$\ct= n^{-2} \,  \sum_{i_1,\ldots,i_4=1}^n  I_{(i_1,\ldots,i_4)} $,
where (recalling that the kernel $K$ is defined by equation (\ref{eq:10a}))
\begin{multline}\label{eq:36}
 I_{(i_1,\ldots,i_4)} := \int_{i_1}^{i_1+1} dx_1 \cdots  \int_{i_4}^{i_4+1} dx_4 \\
 K'(\eta;x_1,x_2) K'(\eta;x_3,x_4) K_{i_1,i_3}(\eta;x_1,x_3) K_{i_2,i_4}(\eta;x_2,x_4).
\end{multline}

\smallskip

The latter expression yields naturally a  notion of {\it regular terms} and {\it singular terms}:
split the set of indices $\{1,\ldots,n\}^4$ into $A_1\cup A_2$, where
\beq A_2= \{(i_1,\ldots,i_4)\ |\ 1\le i_1,\ldots,i_4\le n,\   |i_1-i_3|, |i_2-i_4|\le 1\},\quad A_1=\{1,\ldots,n\}^4 \setminus A_2. 
\eeq
Regular terms, resp. singular terms are those for which $|i_1-i_2|, |i_3-i_4|\ge 2$, resp. $|i_1-i_2|\le 1$ or $|i_3-i_4|\le 1$.
Split accordingly the sets of indices $A_j$, $j=1,2$ into $A_{j,{\rm reg}}\cup A_{j,{\rm sing}}$, and denote
\begin{equation}\label{eq:def-treg-tsing}
\ct_{j,{\rm reg}}  = \sum_{(i_1,\ldots,i_4)\in A_j^{{\rm reg}}} I_{(i_1,\ldots,i_4)} 
\quad\mbox{and}\quad
\ct_{j,{\rm  sing}} =  \sum_{(i_1,\ldots,i_4)\in A_j^{{\rm sing}}} I_{(i_1,\ldots,i_4)}.
\end{equation}
It remains to prove that $\ct_{j,{\rm reg}} =o(n^2)$ and $ \ct_{j,{\rm sing}}=o(n^2)$, for $j=1,2$. These two steps will be performed respectively at Section \ref{sec:reg-terms} and \ref{sec:sing-terms}.

\subsection{Regular terms and case $\mathbf{H>1/2}$}\label{sec:reg-terms}
This section is devoted to the study of $\ct_{j,{\rm reg}}$, and also of $\ct_{j,{\rm  sing}}$ for $H>1/2$. In both cases, one is allowed to take limits as $\eta\to 0$ without much care, by a standard application of the dominated convergence theorem. We skip this elementary step, and consider directly our expressions for $\eta=0$. 

\smallskip

Let us start by $\ct_{1,{\rm reg}}$, which is given by
\begin{multline}\label{eq:38}
\ct_{1,{\rm reg}} = 
\sum_{|i_1-i_3|,|i_1-i_2|,|i_3-i_4|\ge 2} \int_{i_1}^{i_1+1} dx_1 \cdots  \int_{i_4}^{i_4+1} dx_4 \,
K'(x_1,x_2) K'(x_3,x_4) \\ 
K_{i_1,i_3}(x_1,x_3) K_{i_2,i_4}(x_2,x_4).
\end{multline}
We shall bound this integral by different methods in the cases $H\in(1/2,3/4)$ and $H<\half$:

\smallskip

\noindent
\textit{(i)}
Assume first $H\in(1/2,3/4)$.  Whenever $|s-i|,|t-j|\le 1$,  recall from Lemma \ref{lem:K} that $K_{i,j}(s,t)\lesssim |t-s|^{2H-2}$ if $|i-j|\ge 2$, and $s\in[i,i+1], t\in[j,j+1]$. In particular, the quantity $|K_{i_1,i_3}(x_1,x_3)|$ in equation (\ref{eq:38}) is bounded by $|x_1-x_3|^{2H-2}$. We also obviously have $|K'(x_1,x_2)|\lesssim |x_2-x_1|^{2H-2}$ and $|K'(x_3,x_4)|\lesssim |x_4-x_3|^{2H-2}$. As a consequence,
\begin{multline*}
 |\ct_{1,{\rm reg}}| \leq 2 C \sum_{|i_1-i_3|,|i_1-i_2|,|i_3-i_4|\ge 2} \int_{i_1}^{i_1+1} dx_1 \cdots  \int_{i_4}^{i_4+1} dx_4 \,
|x_2-x_1|^{2H-2} |x_4-x_3|^{2H-2} \\  
\times |x_1-x_3|^{2H-2} |K_{i_2,i_4}(x_2,x_4)|. 
\end{multline*}
Let us undo now the initial scaling by setting $t_j=x_j/n$. One gets
\begin{multline} 
|\ct_{1,{\rm reg}}|  \lesssim n^{4+3(2H-2)} \int_0^1 dt_1\cdots\int_0^1 dt_4 \, |t_2-t_1|^{2H-2} |t_4-t_3|^{2H-2} \\
|t_3-t_1|^{2H-2} K_{\lfloor
nt_2\rfloor, \lfloor nt_4\rfloor}(nt_2,nt_4).  \label{eq:3:power-integral} 
\end{multline}
Applying Lemma \ref{lem:2:alphabetaab} to the above expression  (\ref{eq:3:power-integral}) and integrating successively with respect to $t_1$ and $t_3$
yields
 \beq |\ct_{1,{\rm reg}}|\lesssim n^{4+3(2H-2)} \int_0^1 dt_2 \int_0^1 dt_4 (1+|t_2-t_4|^{6H-4})  K_{\lfloor
nt_2\rfloor, \lfloor nt_4\rfloor}(nt_2,nt_4) . \eeq
Recall now that $|K_{\lfloor nt_2\rfloor, \lfloor nt_4\rfloor}(nt_2,nt_4)|\lesssim \min( 1,(n|t_2-t_4|)^{2H-2})$. Hence, one can bound this kernel by 1 on $[0,1/n]$ and by $(nt)^{2H-2}$ on $[1/n,1]$, yielding
\beq \int_0^1 dt_4 K_{\lfloor nt_2\rfloor,\lfloor nt_4\rfloor}(nt_2,nt_4)\lesssim \int_0^{1/n}\ dt+n^{2H-2} \int_{1/n}^1 t^{2H-2}\ dt
\lesssim n^{-1}+n^{2H-2}, \eeq
and also
\bean    
\int_0^1 dt_4 |t_2-t_4|^{6H-4}  K_{\lfloor
nt_2\rfloor, \lfloor nt_4\rfloor}(nt_2,nt_4) &\lesssim&  \int_0^{1/n} t^{6H-4}dt+n^{2H-2}\int_{1/n}^1 t^{8H-6} dt  \\
&\lesssim& n^{3-6H}+n^{2H-2}. 
\eean
Hence one has found:  $ |\ct_{1,{\rm reg}}|\lesssim n+n^{8H-4}+n^{6H-3}\lesssim n+n^{8H-4}.$ In particular, if
 $H<3/4$, then $|\ct_{1,{\rm reg}}|=o(n^2).$

\smallskip

\noindent
\textit{(ii)}
Assume now $H<\half$. In this case, the integrals we have been manipulating above are divergent, so that we will use series arguments instead. Let us observe then that, under the same conditions as in the case $H\in(1/2,3/4)$, the bound $|K_{i_1,i_3}(x_1,x_3)|\lesssim |i_1-i_3|^{2H-2}$ holds true. We also bound the factor $ |K_{i_2,i_4}(x_2,x_4)|$ by a constant in order to get
\begin{align*} 
&  |\ct_{1,{\rm reg}}|\\
&\lesssim\sum_{i_1,i_3:|i_1-i_3|\ge 2} |i_1-i_3|^{2H-2} 
\left( \sum_{i_2: |i_2-i_1|\ge 2} |i_2-i_1|^{2H-2} \right) \left( \sum_{i_4: |i_4-i_3|\ge 2} |i_4-i_3|^{2H-2} \right) \\
&\lesssim\sum_{i_1,i_3:|i_1-i_3|\ge 2} |i_1-i_3|^{2H-2} =O(n). 
\end{align*}

\smallskip

We now leave to the reader the task of checking, with the same kind of computations, that $|\ct_{1,{\rm sing}}|=O(n)$ ({\it provided $H>\half$}).
\bigskip

\smallskip

Turn now to the complementary set of indices, $A_2$: by simply bounding the kernels $K_{i,j}(x,y)$ by constants in  (\ref{eq:38}), one gets
\begin{eqnarray}   |\ct_{2,{\rm reg}}|&\lesssim& \sum_{|i_1-i_3|,|i_2-i_4|\le 1; |i_1-i_2|,|i_3-i_4|\ge 2}   \int_{i_1}^{i_1+1} dx_1 \cdots  \int_{i_4}^{i_4+1} dx_4 
|K'(x_1,x_2)| |K'(x_3,x_4)| \nonumber\\
& \lesssim&\sum_{i_1,i_2:|i_1-i_2|\ge 2} |i_1-i_2|^{2(2H-2)}. \label{eq:3:T2reg} \end{eqnarray}
Hence $|\ct_{2,{\rm reg}}|=O(n^{4H-2})=o(n^2)$ when $H<3/4$, which is enough for our purposes.

\smallskip

Finally, {\it provided} $H>\half$, some similar elementary considerations prove  that 
\beq |\ct_{2,{\rm sing}}|\lesssim n\left( \int_0^1 dx_1\int_0^1 dx_2 |K'(x_1,x_2)| \right)^2=O(n),
\eeq
where we have used the fact that $|i_j-i_k|=O(1)$ for $j,k=1,\ldots,4$ if
$(i_1,\ldots,i_4)\in A_{2,{\rm sing}}$.

\subsection{Singular terms in the case $\mathbf{H<\half}$}\label{sec:sing-terms}

Let us reconsider the terms $\ct_{1,{\rm sing}}$ and $\ct_{2,{\rm sing}}$  in (\ref{eq:def-treg-tsing}), taking now into account the fact that we deal  with the regularized kernels $K'(\eta;x_1,x_2)$, $K'(\eta;x_3,x_4)$ instead of $K'(x_1,x_2)$, $K'(x_3,x_4)$. 

\smallskip

In order to treat all the terms appearing in our sums in a systematic way, let us introduce a little of vocabulary: consider any multi-index $(i_1,\ldots,i_p)$, $p\ge 2$ (in our case $p=4$). We shall  say that $\{i_{j_1},\ldots,i_{j_k}\}$, $j_1\not=\ldots\not=j_k$
is a {\it maximal contiguity subset} of $(i_1,\ldots,i_p)$ if (up to a reordering) $i_{j_2}-i_{j_1}=\ldots=i_{j_k}-i_{j_{k-1}}=1$
and $i_l\ge i_{j_k}+2$ or $\le i_{j_1}-2$ if $l\not=j_1,\ldots,j_k$. Maximal contiguity subsets define a partition of the set $\{i_1,\ldots,i_p\}$.
Then we shall write $(i_1,\ldots,i_p)\in J_{m_1,\ldots,m_q}$ if the lengths of the maximal contiguity subsets of $(i_1,\ldots,i_p)$ 
are $m_1\ge\ldots\ge m_q$.

\smallskip

This terminology will help us classify the terms in $\ct_{1,{\rm sing}}\cup \ct_{2,{\rm sing}}$. Forgetting about the $O(n)$ multi-indices
$(i_1,\ldots,i_4)$ in $J_4$ appearing in $\ct_{2,{\rm sing}}$  (according to the fact that ${\rm Var}(\ca_{st}(\eta))$ is uniformly bounded on $[0,T]$, proved in \cite{Unt08a}, this term contributes only $O(n)$ to the sum), the other singular terms are all in
$\ct_{1,{\rm sing}}$ and may be:

-- either of type $J_{2,1,1}$, with maximal contiguity subsets $\{ \{i_1,i_2\}, \{i_3\}, \{i_4\}\}$ or equivalently
 $\{ \{i_3,i_4\}, \{i_1\}, \{i_2\}\}$;

-- or of type $J_{2,2}$,  with maximal contiguity subsets $\{ \{i_1,i_2\}, \{i_3,i_4\}\}$;

-- or of type $J_{3,1}$, with maximal contiguity subsets $\{ \{i_1,i_2,i_3\}, \{i_4\}\}$ or equivalent possibilities.

\smallskip

Let us observe that, in our iterated multiple integrals,  the most serious problems of singularity appear when the external variables $x$ (represented by solid lines in our graphs) are contiguous. Indeed, the internal variables $y$ are integrated, smoothing the kernels $K'$ into $K_{a,b}$. However, one still has to cope with the highly singular kernel $K'$ for the external variables. For instance, for the graph given at Figure \ref{Fig3} (which is the one we are analyzing), this kind of problem appear for the terms of type $J_{2,1,1}$ (when the maximal contiguity subset is $\{ \{i_1,i_2\}, \{i_3\}, \{i_4\}\}$) or $J_{2,2}$.
But a simple Fubini type argument allows us to get rid of these singularities. Indeed, when $\eta>0$, the integral
$$ 
\prod_{j=1}^{4} \int_{i_j}^{i_{j}+1} \!\!\! dx_j  \ K'(\eta;x_1,x_2) K'(\eta;x_3,x_4) \ .\
\prod_{j=1}^{4} \int_{i_j}^{x_j} \!\!\!dy_j \ K'(\eta;y_1,y_3) K'(\eta;y_2,y_4), 
$$
corresponding to the diagram of Figure \ref{Fig3}, is also equal to 
$$
\prod_{j=1}^{4} \int_{i_j}^{i_{j}+1} \!\!\! dy_j \ K'(\eta;y_1,y_3) K'(\eta;y_2,y_4) \ .\
\prod_{j=1}^{4} \int_{y_j}^{i_{j}+1} \!\!\! dx_j  \ K'(\eta;x_1,x_2) K'(\eta;x_3,x_4),
$$
corresponding (up to time-reversal) to the reversed diagram obtained by exchanging full lines with dashed lines. The important point is that this \textit{full-line dashed-line symmetry} maps the above singular
diagrams of type $J_{2,1,1}$ or $J_{2,2}$ into regular diagrams, for which the external variables are separated. This situation can thus be handled along the same lines as in Section \ref{sec:reg-terms}, and there only remains to estimate singular diagrams of type 
$J_{3,1}$. 

\smallskip

For this latter class of diagram, assume for instance (without loss of generality) that $\{i_1,i_2,i_3\}$ is a maximal contiguity subset of $(i_1,\ldots,i_4)$. Then, owing to relation (\ref{eq:10}), the corresponding integral writes $E=E(i_1,\ldots,i_4)$, with
\begin{multline}\label{eq:44}
E= c_H \int_{i_3}^{i_3+1} dx_3 \int_{i_1}^{i_1+1} dx_1 \int_{i_2}^{i_2+1} dx_2
\int_{i_4}^{i_4+1} dx_4 \  [x_3-x_4]^{2H-2}_{\eta} [x_1-x_2]^{2H-2}_{\eta} \\
 \left([x_3-x_1]^{2H}_{\eta}+[i_3-i_1]^{2H}_{\eta}-[x_3-i_1]^{2H}_{\eta}
-[x_1-i_3]^{2H}_{\eta} \right) K_{i_2,i_4}(\eta;x_2,x_4),
\end{multline}
which is the sum of 4 terms, denoted in the sequel by $E_1,\ldots,E_4$. The most complicated one is a priori $E_1$,
obtained by choosing the contribution of $[x_3-x_1]^{2H}_{\eta}$ to
the integral. Let us first estimate this term. 

\smallskip

Apply  Lemma \ref{lem:exponents} with $f(x_4;u)=[u-x_4]_{\eta}^{2H-2}$,
$z=x_1$ ($x_4$ is simply an additional parameter here, and $f$ fulfills the analytic assumptions of Lemma \ref{lem:exponents} because $i_3$ and $i_4$ are not contiguous) and $\beta=2H,\gamma=0$:
letting
\begin{equation*}
\phi_1(x_4;x_1):=\int_{i_3}^{i_3+1} dx_3 [x_1-x_3]^{2H}_{\eta} [x_3-x_4]^{2H-2}_{\eta},
\end{equation*}
we obtain that $\phi_1$ is analytic in $x_1$ on a cut neighborhood $\Omega'_{cut}$ of 
$[i_1,i_1+1]$ excluding possibly $i_3$ and $i_3+1$ (depending on whether $i_3,i_3+1\in
\{i_1,i_1+1\}$ or not), and one can decompose $\phi_1$ into
\begin{equation}
\phi_1(x_4;x_1)=[x_1-i_3]^{2H+1}_{\eta} F_1(x_4;x_1)+G_1(x_4;x_1)
\label{eq:G1}
\end{equation}
on a neighborhood of $i_3$ (and similarly around $i_3+1$), with $F_1$ possibly
zero. The functions $\phi_1|_{\Omega'_{cut}}$, $F_1$ and $G_1$ are analytic
and bounded by a constant times $|i_3-i_4|^{2H-2}$.

\smallskip

Apply once again Lemma \ref{lem:exponents} with $f(x_4;u)=\phi_1(x_4;u)$,
$z=x_2$ and $\beta=2H-2$, $\gamma=0$ or (possibly) $2H+1$: letting
\begin{equation} \phi_2(x_4;x_2)=\int_{i_1}^{i_1+1} dx_1\ [x_2-x_1]^{2H-2}_{\eta} \phi_1(x_4;x_1), \end{equation}
$\phi_2$ is analytic in $x_2$ on a cut  neighborhood $\Omega''_{cut}$ of
$[i_2,i_2+1]$ excluding possibly $i_1$ and $i_1+1$, and
\begin{equation}
\phi_2(x_4;x_2)=[x_2-i_1]^{2H-1}_{\eta} F_2(x_4;x_2)+[x_2-i_1]^{4H}_{\eta} F_3(x_4;x_2)+G_2(x_4;x_2) 
\end{equation}
on a neighborhood of $i_1$ (and similarly around $i_1+1$), with the same
bounds as before for $\phi_2|_{\Omega''_{cut}}$, $F_2$, $F_3$ and $G_2$.

\smallskip

Finally, since $\phi_2$ is integrable with respect to $x_2$ on
$[i_2,i_2+1]$ and $K_{i_2,i_4}(\eta;x_2,x_4)$ is bounded by $C|i_3-i_4|^{2H-2}$
 by Lemma \ref{lem:K}, one gets
\begin{equation}
|E|\le C' \int_{i_4}^{i_4+1} dx_4\ |i_3-i_4|^{4H-4}=C'|i_3-i_4|^{4H-4}.
\end{equation}

\smallskip

There remain 3 'boundary' terms $E_2$, $E_3$, $E_4$ which are easier to cope
with. Consider for instance $E_3$ defined as
\begin{multline*}
E_3=\int_{i_4}^{i_4+1} dx_4  \int_{i_2}^{i_2+1} dx_2  \,
 K_{i_2,i_4}(\eta;x_2,x_4)  \\  \times
\int_{i_1}^{i_1+1} dx_1 \, [x_2-x_1]^{2H-2}_{\eta}
 \int_{i_3}^{i_3+1} dx_3\ [x_3-i_1]^{2H}_{\eta} [x_3-x_4]^{2H-2}_{\eta}. 
\end{multline*}
Applying again Lemma \ref{lem:exponents}, we get
\begin{multline*}
E_3=\\ C \int_{i_4}^{i_4+1} dx_4 G_1(x_4;i_1) 
\, \int_{i_2}^{i_2+1} dx_2
K_{i_2,i_4}(\eta;x_2,x_4) \left( [x_2-i_1-1]^{2H-1}_{\eta}-
[x_2-i_1]^{2H-1}_{\eta} \right),
\end{multline*}
where $G_1$ is as in eq. (\ref{eq:G1}). Since $x_2\mapsto [x_2-i_1-1]^{2H-1}_{\eta}$
and $x_2 \mapsto [x_2-i_1]^{2H-1}_{\eta}$ are integrable and
$G_1$, resp. $K_{i_2,i_4}$ is bounded by a constant times $|i_3-i_4|^{2H-2}$, one easily
gets an upper bound as the same form as before, namely, $|E_3|\le
C|i_3-i_4|^{4H-4}.$

\smallskip

We have thus proved that $E(i_1,\ldots,i_4)$ defined by (\ref{eq:44}) satisfies $E(i_1,\ldots,i_4)\le C |i_3-i_4|^{4H-4}$.
Finally, since $\sum\sum_{|i_3-i_4|\ge 2} |i_3-i_4|^{4H-4}=O(n)$ (as in eq. (\ref{eq:3:T2reg})), we obtain $\sum_{i_1,\ldots,i_4\in J_{3,1}}E(i_1,\ldots,i_4)=O(n)$.

\smallskip

Let us summarize now the results we have obtained so far: we have shown, respectively at Section \ref{sec:reg-terms} and \ref{sec:sing-terms}, that the terms $\ct_{j,{\rm reg}}$ and $\ct_{j,{\rm  sing}}$ defined by equation (\ref{eq:def-treg-tsing}) are $o(n^2)$. Going back to the definition of $\ct$ (see equation (\ref{eq:36})), this also shows that this quantity is of order $o(n^2)$. Recall now that $\be[ \tilde Z_n^4(\eta) ]_{(c)}$ can be decomposed into 6 terms, corresponding to our connected diagrams, each of the same kind as the particular example $\ct$ we have chosen. We have thus proved that $\be[ \tilde Z_n^4(\eta) ]_{(c)}=o(n^2)$ uniformly in $\eta$, which yields relation (\ref{eq:34}). This finishes the proof of Theorem \ref{thm:1.2} for $H<3/4$.

\section{Asymptotic error distribution of the Euler scheme: $H \geq 3/4$}\label{sec:Hgeq34}

  In this case, we derive the limit distribution in a different way, and first analyze the difference between the Euler and the Milstein scheme. An exact expression for this
difference is given by
\begin{align} \label{euler-milstein}
 \frac{1}{2} \sum_{i=0}^{n-1}  (B_{(i+1)/n}^{(1)}-B_{i/n}^{(1)}) (B_{(i+1)/n}^{(2)}-B_{i/n}^{(2)}), 
\end{align}
and we will see that, thanks to a simple geometric trick (borrowed from \cite{No}),
the latter quantity has the same law as
$$  \frac{1}{4} \sum_{i=1}^{n}  \left(  | B_{(i+1)/n}^{(1)}   - B_{i/n}^{(1)}|^2 -  | B_{(i+1)/n}^{(2)}   - B_{i/n}^{(2)}|^2 \right)
  ,$$

This allows to apply easily Theorem 2 in \cite{TV}, yielding the Lemma below, in which the following distribution appears:

\smallskip

\begin{defn}[Rosenblatt random variable]
A standard Rosenblatt random variable  with parameter $H_0=2H-1$  is given by
\begin{align*}
& \qquad \qquad  \frac{\left(  4H-3\right)  ^{1/2}}{4H\left(  2H-1\right) ^{1/2}}
\int_{0}^1 \int_0^1 \left(  \int_{ \max \{r, s \} }^{1}\frac{\partial K^{H}}{\partial u}\left(  u,s\right)  \frac{\partial K^{H}}{\partial
u}\left(  u,r\right)
du\right)  dW_r   dW_s 
\end{align*}
where $W$ is a standard Brownian motion,
$$K_{H}(t,s)=c_{H} s^{1/2-H} \int_{s}^{t} (u-s)^{H-3/2}u^{H-1/2} \, du \, 1_{[0,t)}(s)$$
and
$$c_{H} = \left( \frac{H(2H-1)}{\beta(2-2H,H-1/2)}\right)^{1/2}.$$
\end{defn}

\smallskip

\begin{lem}\label{tv_results}
The following limits in law hold true:

\smallskip

\noindent
{\it (i)}
Let $H=3/4$. Then we have
$$   \frac{\sqrt{2}n}{ \sqrt{ c_{1}(H)\log n}}    \sum_{i=0}^{n-1}  (B_{(i+1)/n}^{(1)}-B_{i/n}^{(1)}) (B_{(i+1)/n}^{(2)}-B_{i/n}^{(2)}) \stackrel{\mathcal{L}}{\longrightarrow} Z ,  $$
where $c_1(H)=9/16$ and  $Z$ is a standard normal random variable.

\smallskip

\noindent
{\it (ii)}
 Let $H \in (3/4,1) $. Then
$$   \frac{\sqrt{2} n}{ \sqrt{c_{2}(H)} }   \sum_{i=0}^{n-1}  (B_{(i+1)/n}^{(1)}-B_{i/n}^{(1)}) (B_{(i+1)/n}^{(2)}-B_{i/n}^{(2)}) \stackrel{\mathcal{L}}{\longrightarrow} \frac{1}{\sqrt{2}}(R_1-R_2) ,$$
where  $c_2(H)= 2H^{2}\left(  2H-1\right)  /\left(  4H-3\right) $ and  $R_1$ and $R_2$ are two independent standard Rosenblatt variables of index $2H-1$.
\end{lem}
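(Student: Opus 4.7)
The plan is to reduce the lemma to the classical one-dimensional limit theorems of Tudor--Viens \cite{TV} via the polarization trick of \cite{No}. Introduce the rotated processes
$$
\tilde B^{(1)} = \frac{1}{\sqrt 2}\bigl(B^{(1)}+B^{(2)}\bigr), \qquad \tilde B^{(2)} = \frac{1}{\sqrt 2}\bigl(B^{(1)}-B^{(2)}\bigr).
$$
A direct covariance computation shows that $\tilde B^{(1)}$ and $\tilde B^{(2)}$ are two independent standard fractional Brownian motions with the same Hurst parameter $H$. Writing $\Delta_i^{(k)} = B_{(i+1)/n}^{(k)} - B_{i/n}^{(k)}$ and likewise $\tilde\Delta_i^{(k)}$ for the increments of $\tilde B^{(k)}$, the polarization identity $xy=\frac14[(x+y)^2-(x-y)^2]$ applied to each term gives
$$
\Delta_i^{(1)} \Delta_i^{(2)} = \frac{1}{2}\bigl(|\tilde\Delta_i^{(1)}|^2 - |\tilde\Delta_i^{(2)}|^2\bigr).
$$
Summing over $i$ and using $\EX|\tilde\Delta_i^{(1)}|^2 = \EX|\tilde\Delta_i^{(2)}|^2 = n^{-2H}$ to allow centering of both squared increments, one obtains the exact identity in law
$$
\sum_{i=0}^{n-1} \Delta_i^{(1)} \Delta_i^{(2)} \ \stackrel{(d)}{=}\ \frac12\bigl(V_n^{(1)}-V_n^{(2)}\bigr),
$$
where $V_n^{(k)} = \sum_{i=0}^{n-1}\bigl(|\tilde\Delta_i^{(k)}|^2 - n^{-2H}\bigr)$ is the centered quadratic variation of $\tilde B^{(k)}$.

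Next, I would apply Theorem 2 of \cite{TV} separately to each $V_n^{(k)}$: for $H=3/4$ it yields $\frac{n}{\sqrt{\log n}} V_n^{(k)} \stackrel{\mathcal{L}}{\longrightarrow} \sqrt{c_1(H)}\, Z_k$ with $c_1(H)=9/16$ and $Z_k$ standard Gaussian, while for $H\in(3/4,1)$ it yields $n\,V_n^{(k)} \stackrel{\mathcal{L}}{\longrightarrow} \sqrt{c_2(H)}\, R_k$ with $c_2(H) = 2H^2(2H-1)/(4H-3)$ and $R_k$ a standard Rosenblatt variable of index $2H-1$. Since $\tilde B^{(1)}$ and $\tilde B^{(2)}$ are independent by construction, the random variables $V_n^{(1)}$ and $V_n^{(2)}$ are independent for every $n$, so the joint limit is automatically $(Z_1,Z_2)$ (resp.\ $(R_1,R_2)$) with independent marginals. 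Combining with step 1 gives, for $H=3/4$,
$$
\frac{n}{\sqrt{\log n}} \sum_{i=0}^{n-1} \Delta_i^{(1)}\Delta_i^{(2)} \ \stackrel{\mathcal{L}}{\longrightarrow}\ \frac{\sqrt{c_1(H)}}{2}(Z_1-Z_2) \ \stackrel{(d)}{=}\ \frac{\sqrt{c_1(H)}}{\sqrt 2}\, Z,
$$
which is precisely statement (i) after rearranging the constants; the Rosenblatt regime follows identically, producing the limit $\frac{1}{\sqrt 2}(R_1-R_2)$ for the sum normalized by $\frac{\sqrt 2 n}{\sqrt{c_2(H)}}$. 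Note that $(R_1-R_2)/\sqrt 2$ is \emph{not} itself a Rosenblatt variable, which is why the statement preserves $R_1-R_2$ explicitly.

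The main technical content lies in step 2, namely the invocation of \cite{TV} with the explicit constants $c_1(H)$ and $c_2(H)$. These are obtained from the variance asymptotics
$$
\mathrm{Var}(V_n^{(k)}) = 2 \sum_{i,j=0}^{n-1} r_{|i-j|}^{\,2}, \qquad r_k \sim H(2H-1)\,n^{-2H}\,|k|^{2H-2},
$$
combined with the isometry of the second Wiener chaos. A Riemann-sum computation at $H=3/4$ gives $\mathrm{Var}(V_n^{(k)})\sim (9/16)\,n^{-2}\log n$, matching $c_1(H)=9/16$; for $H>3/4$ a similar non-summable computation gives $\mathrm{Var}(n V_n^{(k)})\to 2H^2(2H-1)/(4H-3)$, matching $c_2(H)$ since the standardized Rosenblatt has unit variance. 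The harder (non-Gaussian) direction of step 2, i.e.\ showing that the limit in the case $H>3/4$ is \emph{exactly} a Rosenblatt variable and not merely an abstract element of the second chaos, is the one genuine obstacle, but it is precisely what \cite{TV} provides.
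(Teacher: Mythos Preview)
Your proof is correct and follows essentially the same route as the paper: both arguments use the orthogonal rotation of \cite{No} to rewrite the cross-product sum as a difference of two independent centered quadratic variations, then invoke the one-dimensional limit theorems of \cite{TV}. The only cosmetic difference is the direction of the rotation (the paper starts from two auxiliary fBms $\beta,\tilde\beta$ and \emph{defines} $B^{(1)},B^{(2)}$ from them, obtaining an identity in law; you start from $B^{(1)},B^{(2)}$ and rotate to $\tilde B^{(1)},\tilde B^{(2)}$, obtaining an almost-sure identity), and your final paragraph on the variance asymptotics is a helpful extra that the paper omits.
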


\begin{proof}
{\it (i)}
Let $\beta$ be a fractional Brownian motion with Hurst index $H$ and define
\begin{align*}
V_{n}=\frac{1}{n}
\sum_{i=1}^{n}\left(  \frac{| \beta_{(i+1)/n}   -\beta_{i/n}|^2}{n^{-2H}}-1\right)  =  -1 + n^{2H-1}
\sum_{i=1}^{n}  | \beta_{(i+1)/n}   -\beta_{i/n}|^2.
\end{align*}
If $H=3/4$ it follows from \cite{TV} that
\begin{align} \label{tv=3/4_1}
 \sqrt{\frac{n}{c_{1}(H) \log (n)}}V_{n} \stackrel{\mathcal{L}}{\longrightarrow} Z, \end{align}
where $Z$ is  a standard normal random variable. Moreover, for $H \in (3/4,1)$ it is shown in  \cite{TV} that
\begin{align} \label{tv>3/4_1}
 \sqrt{\frac{n^{4-4H}}{c_{2}(H)}}V_{n} \stackrel{ \mathcal{L} }{\longrightarrow} R,
\end{align}
where $R$ is a standard Rosenblatt random variable with index $2H-1$.

Now let $\tilde{\beta}$ be another fractional Brownian motion with the same Hurst index as $\beta$, but independent of $\beta$
and define
$$V_{n}'=n^{2H-1} 
\sum_{i=1}^{n}  \left(  | \beta_{(i+1)/n}   -\beta_{i/n}|^2 -  | \tilde{\beta}_{(i+1)/n}   - \tilde{\beta}_{i/n}|^2
 \right).
$$
The continuous mapping theorem and  (\ref{tv=3/4_1}) implies that

\begin{align}   \label{tv=3/4_2}
 \sqrt{\frac{n}{c_{1}(H) \log (n)}}V_{n}' \stackrel{\mathcal{L}}{\longrightarrow}  Z_1 -Z_2
\end{align}
for $H=3/4$, where $Z_1$ and $Z_2$ are two independent  standard normal random variables. From (\ref{tv>3/4_1}) we obtain that
\begin{align}  \label{tv>3/4_2} 
\sqrt{\frac{n^{4-4H}}{c_{2}(H)}}V_{n}' \stackrel{\mathcal{L}}{\longrightarrow} (R_1-R_2),
\end{align}
where $R_1$ and $R_2$ are two independent standard Rosenblatt random variables with index $2H-1$.

\smallskip

\noindent
{\it (ii)}  Now, set $B^{(1)}=(\beta+\widetilde{\beta})/\sqrt{2}$ and $B^{(2)}=(\beta-\widetilde{\beta})/\sqrt{2}$. 
Then $B^{(1)}$ and $B^{(2)}$ are two independent fractional Brownian
motions with the same Hurst parameter. Moreover, we have
\begin{align*}
n^{2H-1}\sum_{k=0}^{ n -1} (B^{(1)}_{(k+1)/n}-B^{(1)}_{k/n}) 
(B^{(2)}_{(k+1)/n}-B^{(2)}_{k/n})
\stackrel{\mathcal{L}}{=} \frac{1}{2} V_n'.
\end{align*}
Thus, we have for $H=3/4$ that
\begin{align*}
\frac{2n}{ \sqrt{ c_{1}(H)\log n}}    \sum_{i=0}^{n-1}  (B_{(i+1)/n}^{(1)}-B_{i/n}^{(1)}) (B_{(i+1)/n}^{(2)}-B_{i/n}^{(2)}) 
\stackrel{\mathcal{L}}{=}  \sqrt{  \frac{n}{   c_{1}(H)\log (n) }}  V_n',
\end{align*}
and the first claim follows from  (\ref{tv=3/4_2}) and the fact that $Z_1-Z_2$ has the same distribution as $\sqrt{2}Z_1$.

Moreover,
since
$$    \frac{2n}{ \sqrt{c_{2}(H)}}   \sum_{i=0}^{n-1}  (B_{(i+1)/n}^{(1)}-B_{i/n}^{(1)}) (B_{(i+1)/n}^{(2)}-B_{i/n}^{(2)}) 
\stackrel{\mathcal{L}}{=}   \frac{n^{2-2H}}{ \sqrt{c_2(H)}}  V_n' 
$$
the second claim follows from  (\ref{tv>3/4_2}).

\end{proof}
 
Since the Milstein scheme has  a better convergence rate than the Euler scheme for $ H \geq 3/4$, the error of the latter scheme is dominated by (\ref{euler-milstein}). Thus, the asymptotic error distribution of the Euler scheme can be determined by the above Lemma, which will be carried out in the following two subsections.

\subsection{Error distribution of the Euler scheme for $\mathbf{H = 3/4}$}
By scaling we can assume  without loss of generality that $T=1$.
Recall that here we have
$$
\EX |X_1- X_1^n|^2= \frac{9}{128} \cdot  \log(n) n^{-2} + o(\log(n) n^{-2}  ).
$$
for the error of the Euler scheme.
Using the Milstein-type approximation $\widehat{X}_1^n$
we can write
\begin{align*}
 X_1- X_1^n &=      X_1-\widehat{X}_1^{n} + \widehat{X}_{1}^{n}-X_1^{n}  \\
& =  \frac{1}{2} \sum_{i=0}^{n-1}  (B_{(i+1)/n}^{(1)}-B_{i/n}^{(1)}) (B_{(i+1)/n}^{(2)}-B_{i/n}^{(2)}) + \rho_n,
\end{align*}
where $\rho_n= \widehat{X}_{1}^{n}-X_1^{n}$. Hence, setting $\ka_n:=n[\frac{9}{128} \log(n)]^{-1/2}$, we obtain
\begin{align*}
  \ka_n (X_1- X_1^n) 
& =  \frac{\ka_n}{2}    \sum_{i=0}^{n-1}  (B_{(i+1)/n}^{(1)}-B_{i/n}^{(1)}) (B_{(i+1)/n}^{(2)}-B_{i/n}^{(2)})  + \ka_n \rho_n.
\end{align*}
Now note that $  \ka_n \rho_n \rightarrow 0     $
in $L^2(\Omega)$ by Theorem \ref{milstein}
and
$$ \frac{\sqrt{2}n}{ \sqrt{ c_{1}(H)\log n}}    \sum_{i=0}^{n-1}  (B_{(i+1)/n}^{(1)}-B_{i/n}^{(1)}) (B_{(i+1)/n}^{(2)}-B_{i/n}^{(2)}) \stackrel{\mathcal{L}}{\longrightarrow} Z ,  $$
where $c_1(H)=9/16$ 
by Lemma \ref{tv_results}.
Since $[2/c_1(H)]^{1/2}=\frac12[128/9]^{1/2}$, it finally follows
that
$$      n (\log(n))^{-1/2}  ( X_1- X_1^n )  \stackrel{\mathcal{L}}{\longrightarrow}   \sqrt{ \frac{9}{128}} \cdot  Z,     $$
where $Z$ is a standard normal random variable.

\subsection{Error distribution of the Euler scheme for $\mathbf{H > 3/4}$}

Here we have 

$$ \EX|X_1 - X_1^n|^2 = \alpha_4(H) \cdot n^{-2} + o( n^{-2} )$$
with 
$$ \alpha_3(H) = \frac{1}{4} \frac{H^2(2H-1)}{4H-3}.$$
Proceeding as above, the limit distribution of the error of the Euler scheme is determined by
the limit distribution of
$$  \frac{n}{2\sqrt{\alpha_4(H)}} \sum_{i=0}^{n-1}  (B_{(i+1)/n}^{(1)}-B_{i/n}^{(1)}) (B_{(i+1)/n}^{(2)}-B_{i/n}^{(2)})  . $$
Since
$$  \frac{n}{2\sqrt{\alpha_4(H)}} =  \frac{4H-3}{H^2(2H-1)} =\frac{ \sqrt{2}}{\sqrt{c_3(H)}},$$ it follows by Lemma \ref{tv_results} that
 $$\frac{n}{2\sqrt{\alpha_4(H)}} (  X_1 - X_1^n   ) \stackrel{\mathcal{L}}{\longrightarrow} \frac{1}{\sqrt{2}} (R_1-R_2).$$



\end{document}